\numberwithin{equation}{section}
\numberwithin{figure}{section}
\theoremstyle{plain}
\newtheorem{theorem}{Theorem}[section]
\newtheorem*{conjecturea}{Conjecture A}
\newtheorem*{conjectureb}{Conjecture B}
\newtheorem*{conjecturec}{Conjecture C}
\newtheorem{proposition}[theorem]{Proposition}
\newtheorem{lemma}[theorem]{Lemma}
\newtheorem{corollary}[theorem]{Corollary}
\newtheorem{remark}[theorem]{Remark}
\newtheorem{example}[theorem]{Example}
\newtheorem{definition}[theorem]{Definition}
\title[H\"older continuous maps on the interval  with positive  metric mean dimension]{H\"older continuous maps on the interval   with positive  metric mean dimension}
\author{Jeovanny  M. Acevedo,  Sergio Roma\~na, Raibel Arias}
\address{Jeovanny de Jesus Muentes Acevedo, Facultad de Ciencias B\'asicas,  Universidad Tecnol\'ogica de  Bol\'ivar, Cartagena de Indias - Colombia}
\email{jmuentes@utb.edu.co}
\address{Sergio Augusto Roma\~na Ibarra,   Universidade Federal do Rio de Janeiro,  Rio de Janeiro - Brasil}
\email{sergiori@im.ufrj.br}
\address{Raibel de Jesus Arias Cantillo,    Universidade Federal de Maranh\~ao, Campus Balsas, Brasil}
\email{raibel.jac@ufma.br}
\begin{document}

\begin{abstract}
Fix   a    compact metric space $X$ with finite topological dimension.  Let $C^{0}(X)$  be   the space of continuous maps on $X$ and  $ H^{\alpha}(X)$    the space of $\alpha$-H\"older continuous maps on $X$,   for $\alpha\in (0,1].$ $H^{1}(X)$ is the space of Lipschitz continuous maps on $X$.  We have $$H^{1}(X)\subset H^{\beta}(X) \subset H^{\alpha}(X) \subset C^{0}(X),\quad\text{ where }0<\alpha<\beta<1.$$ 
It is well-known that if $\phi\in H^{1}(X)$, then $\phi$ has metric mean dimension equal to zero. On the other hand, if $X$ is a finite dimensional compact manifold, then $C^{0}(X)$ contains a residual subset whose elements have positive metric mean dimension.  In this work we will prove that, for any $\alpha\in (0,1)$, there exists  $\phi\in H^{\alpha}([0,1]) $ with positive metric mean dimension. 
 \end{abstract}
 
\keywords{metric mean dimension, topological entropy, H\"older continuous maps}

\subjclass[2010]{	54H20, 	37E05, 	37A35}

\date{\today}
\maketitle


\section{Introduction}

 In  \cite{lind},  Lindestrauss and Weiss introduced the notion of  metric mean dimension for any continuous map $\phi:X\rightarrow X$, where $X$ is a compact metric space with metric $d$.    We will   denote by $ 
 \underline{\text{mdim}}_{\text{M}}(X,d,\phi)$ and $\overline{\text{mdim}}_{\text{M}}(X,d,\phi)$, respectively, the lower and upper metric mean dimension of $\phi:X\rightarrow X$.  We have  
 \begin{equation}\label{ejfkfg}
 \underline{\text{mdim}}_{\text{M}}(X,d,\phi)\leq \overline{\text{mdim}}_{\text{M}}(X,d,\phi).\end{equation}
 
  \medskip
  
  Denote by $h_{\text{top}}(\phi)$ the topological entropy of $\phi:X\rightarrow X $. We have if $\underline{\text{mdim}}_{\text{M}}(X,d,\phi)>0$, then $h_{\text{top}}(\phi)=\infty$.
  Example  \ref{equaltozero} proves there exist continuous maps $\phi:X\rightarrow X $ with infinite topological entropy and metric mean dimension equal to zero.  
  
  \medskip
  
  Let $N$ be a compact Riemannian  manifold with topological dimension $\dim(N)=n$.   In  \cite{Carvalho},  the authors proved  if  $n\geq 2$, then the set consisting  of  homeomorphisms on $N$ with upper metric mean dimension equal to $ n$ is residual in $ \text{Hom}(N)$. This fact is proved in  \cite{JeoPMD} for  $C^{0}(N)$ instead of $ \text{Hom}(N)$.    On the other hand, 
  any Lipschitz continuous map defined on a finite dimensional compact metric space has finite entropy (see \cite{Katok}, Theorem 3.2.9), therefore, it has   metric mean dimension equal to zero.

  \medskip
  
  For any $\alpha\in (0,1)$, we denote by $H^{\alpha}([0,1])$ the set consisting of $\alpha$-H\"older continuous maps on the interval $[0,1]$.   Hazard in \cite{Hazard} proves there exist continuous maps on the interval with infinite entropy  which are $\alpha$-H\"older  for any $\alpha\in (0,1)$. However, the example showed by Hazard has zero metric mean dimension (see Example  \ref{equaltozero}). In this work, we will show, for any $\alpha\in (0,1)$, there exists a $\phi\in H^{\alpha}([0,1])$ with   $$ 
 \underline{\text{mdim}}_{\text{M}}([0,1],|\cdot|,\phi)= \overline{\text{mdim}}_{\text{M}}([0,1],|\cdot|,\phi)=1-\alpha.$$
  
  \medskip
  
 In the next section, we will present the definition of metric mean dimension.  In Section \ref{hoorseshoe}, we will show some results about the   metric mean dimension for continuous maps  with horseshoes and several examples. Although the inequality given in \eqref{ejfkfg} is clear,   we do not know   any reference in which is showed an explicit example of a continuous map on the interval  such that the inequality is strict. We will construct this kind of examples in Section \ref{hoorseshoe}. 
  Furthermore, we  prove for   $a,b\in [0,1]$, with $a<b$, the set   consisting of continuous maps $\phi:[0,1]\rightarrow [0,1]$ such that $$      \underline{\text{mdim}}_{\text{M}}([0,1] ,|\cdot |,\phi)=a\quad\text{and}\quad    \overline{\text{mdim}}_{\text{M}}([0,1] ,|\cdot |,\phi)=b$$ is dense in 
   $C^{0}([0,1]).$ In Section \ref{section4} we show the existence of H\"older continuous maps with positive metric mean dimension. Finally, in the last section we will leave some conjectures that arise from this research.  

\medskip

\section{Metric mean dimension for continuous maps}      
Throughout this work,  $X$ will be a compact metric space  endowed with a metric $d$ and $\phi: X\rightarrow X$ a continuous map.       For any  
$n\in\mathbb{N}$, we define the metric $d_n:X\times X\to [0,\infty)$ by
$$
d_n(x,y)=\max\{d(x,y),d(\phi(x),\phi(y)),\dots,d(\phi^{n-1}(x),\phi^{n-1}(y))\}.
$$ \begin{definition}  Fix $\varepsilon>0$. 
\begin{itemize}\item We say that $A\subset X$ is an $(n,\phi,\varepsilon)$-\textit{separated} set
if $d_n(x,y)>\varepsilon$, for any two  distinct points  $x,y\in A$. We denote by $\emph{sep}(n,\phi,\varepsilon)$ the maximal cardinality of any $(n,\phi,\varepsilon)$-{separated}
subset of $X$. 
\item We say that $E\subset X$ is an $(n,\phi,\varepsilon)$-\textit{spanning} set for $X$ if
for any $x\in X$ there exists $y\in E$ such  that $d_n(x,y)<\varepsilon$. Let $\emph{span}(n,\phi,\varepsilon)$ be the minimum cardinality
of any $(n,\phi,\varepsilon)$-spanning subset of $X$.     \end{itemize}\end{definition}
 

 \begin{definition}
  The \emph{topological entropy} of $(X,\phi,d)$   is defined by      
  \begin{equation*}\label{topent}h_{\emph{top}}(\phi)=\lim _{\varepsilon\to0} \emph{sep}(\phi,\varepsilon)=\lim_{\varepsilon\to0}  \emph{span}(\phi,\varepsilon),
\end{equation*}
 where $$\emph{sep}(\phi,\varepsilon)=\underset{n\to\infty}\limsup \frac{1}{n}\log \emph{sep}(n,\phi,\varepsilon)\quad\text{and}\quad\emph{span}(\phi,\varepsilon)=\underset{n\to\infty}\limsup \frac{1}{n}\log \emph{span}(n,\phi,\varepsilon).$$\end{definition}

 \begin{definition}
  We define the \emph{lower  metric mean dimension}   and the \emph{upper metric mean dimension} of $(X,d,\phi )$ by
  \begin{equation*}\label{metric-mean}
 \underline{\emph{mdim}}_{\emph{M}}(X,d,\phi)=\liminf_{\varepsilon\to0} \frac{\emph{sep}(\phi,\varepsilon)}{|\log \varepsilon|}\quad \text{ and }\quad\overline{\emph{mdim}}_{\emph{M}}(X,d,\phi)=\limsup_{\varepsilon\to0} \frac{\emph{sep}(\phi,\varepsilon)}{|\log \varepsilon|},
\end{equation*}
respectively. \end{definition}

We also have that
$$
\underline{\text{mdim}}_{\text{M}}(X,d,\phi )=\liminf_{\varepsilon\to0} \frac{\text{span}(\phi,\varepsilon)}{|\log \varepsilon|} 
\quad\text{ and }\quad \overline{\text{mdim}}_{\text{M}}(X,d,\phi )=\limsup_{\varepsilon\to0} \frac{\text{span}(\phi,\varepsilon)}{|\log \varepsilon|}.
$$

\begin{remark}
Throughout the paper, we will omit the underline and the overline  on the notations of  the metric mean dimension when the result be valid for both cases. 
\end{remark}

\begin{remark}\label{kjjj} Topological entropy does not  depend on the metric $d$. However, the metric mean dimension depends on the metric (see \cite{lind}), therefore, it is not an invariant under topological conjugacy.  
 If $X=[0,1]$, we consider the metric $d(x,y)=|x-y|$, for every $x,y\in [0,1]$. We will denote this metric by $|\cdot|$. 
\end{remark}

\begin{remark} For any  continuous map  $\phi:X\rightarrow X$, it is proved in  \cite{VV} that 
\begin{equation}\label{boundd}0\leq \overline{\emph{mdim}}_{\emph{M}}(X ,d,\phi) \leq \overline{\emph{dim}}_{\emph{B}} (X,d) \quad \text{ and }\quad 0\leq \underline{\emph{mdim}}_{\emph{M}}(X ,d,\phi) \leq \underline{\emph{dim}}_{\emph{B}} (X,d) ,\end{equation}
where $\underline{\emph{dim}}_{\emph{B}} (X,d) $ and $\overline{\emph{dim}}_{\emph{B}} (X,d) $ are respectively the lower and upper box dimension of $X$ with respect to $d$. \end{remark}

From the above remark we have  if $N$ is an $n$-dimensional compact Riemannian manifold with Riemannian metric $d$ we have 
\begin{equation*}\label{dbounddw}  0\leq  {\text{mdim}}_{\text{M}}(N ,d,\phi) \leq n,\end{equation*}
for any continuous map $\phi:N\rightarrow N$. In particular, if $\phi: [0,1]\rightarrow [0,1]$ is a continuous map, we have  
 \begin{equation}\label{bounddw} 0\leq  {\text{mdim}}_{\text{M}}([0,1] ,|\cdot |,\phi) \leq 1.\end{equation}

\section{Horseshoes and metric mean dimension}\label{hoorseshoe}

  An $s$-\textit{horseshoe}   for $\phi:[0,1]\rightarrow [0,1]$ is an interval $J=[a,b]\subseteq [0,1]$ which has a partition into $s$ subintervals $J_{1},\dots,J_{s}$, such that $J_{j}^{\circ}\cap  J_{i}^{\circ}=\emptyset$ for $i\neq j$ and $J\subseteq \phi (\overline{J}_{i})$ for each $i=1,\dots, s$ (in Figure \ref{Ima222} we show a 3-horseshoe).  The subintervals $J_{i}$  will be called \textit{legs} of the horseshoe $J$ and the length $|J|:=b-a$  is its \textit{size}.    
  
  \begin{figure}[ht] 
  \centering
\includegraphics[width=0.3\textwidth]{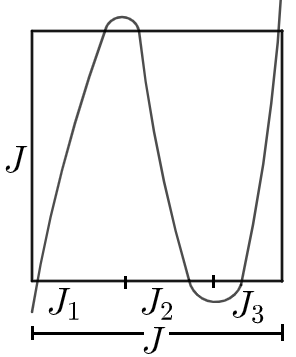}
\caption{$J$ is an $3$-horseshoe}
  \label{Ima222}
\end{figure}
\medskip

Misiurewicz in \cite{Misiurewicz}, proved if $\phi:[0,1]\rightarrow [0,1]$ is a continuous map with $h_{\text{top}}(\phi)>0$,  then there
exist sequences of positive integers $k_{n}$ and $s_{n}$ such that, for each $n$, $\phi^{k_{n}}$ has 
an $s_{n}$- horseshoe and 
$$ h_{\text{top}}(\phi)= \lim_{n\rightarrow \infty}\frac{1}{k_{n}}\log s_{n}.     $$

  For metric mean dimension, in       \cite{VV}, Lemma 6, is proved that: 

\begin{lemma}\label{lema1}   Suppose that $I_{k}=[a_{k-1},a_{k}]\subseteq [0,1]$ is an  $s_{k}$-horseshoe for $\phi:[0,1]\rightarrow [0,1]$ consisting of $s_{k}$ subintervals with the same length $I_{k}^{1},\dots, I_{k}^{s_{k}}$. Setting  $\varepsilon_{k}=\frac{|I_{k}|}{s_{k}}$, 
  we have 
$$ \emph{sep}(  \phi  , \varepsilon_{k})\geq \log(s_{k}/2). $$
\end{lemma}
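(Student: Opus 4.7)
The plan is to build, for each $n\geq 1$, an $(n,\phi,\varepsilon_{k})$-separated subset of $I_{k}$ of cardinality at least $\lceil s_{k}/2\rceil^{n}\geq (s_{k}/2)^{n}$. The combinatorial core is to discard every other leg of the horseshoe, so that the retained legs are spatially separated by at least one discarded leg; this forces the required gap of $\varepsilon_{k}$ between orbits whose symbolic codes differ.

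\medskip

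I would first fix the family $\mathcal{F}=\{I_{k}^{1},I_{k}^{3},I_{k}^{5},\ldots\}$ of odd-indexed legs, of cardinality $\lceil s_{k}/2\rceil$. Since each $I_{k}^{i}$ has length $\varepsilon_{k}$ and $\bigcup_{i}I_{k}^{i}=I_{k}$, a direct computation gives $\mathrm{dist}(\overline{I_{k}^{i}},\overline{I_{k}^{j}})=(|i-j|-1)\varepsilon_{k}\geq \varepsilon_{k}$ for distinct odd $i,j$. Next, to each word $\alpha=(\alpha_{1},\ldots,\alpha_{n})$ with $\alpha_{j}$ odd I associate a point $x_{\alpha}\in I_{k}$ satisfying $\phi^{j-1}(x_{\alpha})\in\overline{I_{k}^{\alpha_{j}}}$ for every $j$. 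Such an $x_{\alpha}$ is produced by backwards iteration: because $\phi(\overline{I_{k}^{i}})\supseteq I_{k}$ by the horseshoe hypothesis, one picks $y_{n}\in I_{k}^{\alpha_{n}}$ and recursively selects $y_{j}\in\overline{I_{k}^{\alpha_{j}}}$ with $\phi(y_{j})=y_{j+1}$ for $j=n-1,\ldots,1$; then $x_{\alpha}:=y_{1}$ does the job.

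\medskip

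For the separation, fix two distinct words $\alpha\neq\beta$ and let $l$ be a coordinate where they differ. Then $\phi^{l-1}(x_{\alpha})\in\overline{I_{k}^{\alpha_{l}}}$ and $\phi^{l-1}(x_{\beta})\in\overline{I_{k}^{\beta_{l}}}$ lie in two distinct legs of $\mathcal{F}$, so $d(\phi^{l-1}(x_{\alpha}),\phi^{l-1}(x_{\beta}))\geq\varepsilon_{k}$ and hence $d_{n}(x_{\alpha},x_{\beta})\geq \varepsilon_{k}$. The main obstacle is upgrading this to the strict inequality $d_{n}(x_{\alpha},x_{\beta})>\varepsilon_{k}$ demanded by the definition of separated set, since the infimum of distances between the closed legs in $\mathcal{F}$ is realized exactly at the facing endpoints of adjacent chosen legs. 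I would bypass it by taking $y_{n}$ strictly inside $I_{k}^{\alpha_{n}}$ and, at each backward step, choosing $y_{j}\in\mathrm{int}(I_{k}^{\alpha_{j}})$; this is possible because $\phi(\partial I_{k}^{\alpha_{j}})$ is a two-point set and a generic choice of the initial target $y_{n}$ avoids all preimages of these finitely many exceptional values along the orbit.

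\medskip

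Once the strict separation is secured, the resulting set $E_{n}=\{x_{\alpha}\}$ has cardinality $\lceil s_{k}/2\rceil^{n}$, which yields
\[
\frac{1}{n}\log\text{sep}(n,\phi,\varepsilon_{k})\geq \log\lceil s_{k}/2\rceil\geq \log(s_{k}/2),
\]
and passing to $\limsup_{n\to\infty}$ gives $\text{sep}(\phi,\varepsilon_{k})\geq \log(s_{k}/2)$.
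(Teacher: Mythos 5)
Your argument is essentially the standard proof of this lemma; note that the paper itself does not prove it but quotes it from \cite{VV} (Lemma 6), and the route there is the same: retain every other leg (after labelling the legs in spatial order), code itineraries in the retained legs, and produce one orbit per itinerary via the covering property $\phi(\overline{I_k^{i}})\supseteq I_k$, which yields $\lceil s_k/2\rceil^{n}$ points that are $(n,\phi,\varepsilon_k)$-separated. The only step you state loosely is the upgrade to the strict inequality $d_n>\varepsilon_k$: a ``generic choice of the initial target $y_n$'' does not by itself control the construction, because the backward orbit is not a function of $y_n$ (preimages are chosen at every step, and a careless intermediate choice can be forced onto an endpoint no matter what $y_n$ was). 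The clean repair is to run your idea on open sets: put $G_n=\mathrm{int}(I_k^{\alpha_n})$ and $G_j=\mathrm{int}(I_k^{\alpha_j})\cap\phi^{-1}(G_{j+1})$; each $G_j$ is open, and it is nonempty because $G_{j+1}$ is a nonempty open (hence infinite) subset of $I_k\subseteq\phi(\overline{I_k^{\alpha_j}})$, while at most the two values in $\phi(\partial I_k^{\alpha_j})$ can fail to have a preimage in $\mathrm{int}(I_k^{\alpha_j})$. Any $x_\alpha\in G_1$ then has its whole orbit segment in the interiors of the prescribed legs, which gives the strict separation you need, and the rest of your counting and the passage to $\limsup$ are correct.
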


The above lemma provides a lower bound for the \textit{upper}  metric mean dimension of a continuous map $\phi:[0,1]\rightarrow [0,1]$   with   a sequence of horseshoes, since, with the conditions presented, we can prove that  \begin{equation}\label{feff}\overline{\text{mdim}}_{\text{M}}([0,1],|\cdot|,\phi)\geq \limsup_{k\rightarrow \infty}\frac{\log s_{k}}{|\log\varepsilon_{k}|}= \underset{k\rightarrow \infty}{\limsup}\frac{ 1}{\left|1-\frac{\log |I_{k}|}{\log s_{k}}\right|},   \end{equation} as can be seen  in the proof of   Proposition 8 from \cite{VV}.  In order to obtain the exact value of the (lower and upper) metric mean dimension of a continuous map on the interval, we must be   carefully choosing both the number and the size of the legs of the horseshoes.   Inspired by the examples presented in \cite{Kolyada}, in \cite{JeoPMD}  was proved the next result, which, together with the Lemma \ref{lema1}, will give us examples of continuous maps on the interval with metric mean dimension equal to a fixed value (see Examples   \ref{med1}, \ref{med12} and  \ref{1example}).
 
 \begin{theorem}\label{misiu}
Suppose  for each $k\in\mathbb{N}$ there exists    a   $s_{k}$-horseshoe for $\phi\in C^{0}([0,1])$,  $I_{k}=[a_{k-1},a_{k}]\subseteq [0,1]$, consisting of sub-intervals   with the same length  $I_{k}^{1}, I_{k}^{2},\dots, I_{k}^{s_{k}} $ and $[0,1]=\cup_{k=1}^{\infty}I_{k}$.  
We can rearrange the intervals and suppose that  $2\leq s_{k}\leq s_{k+1}$ for each $k$. If each $\phi|_{I_{k}^{i}}:I_{k}^{i}\rightarrow I_{k} $ is a bijective  affine  map for all $k$ and $i=1,\dots, s_{k}$, we have \begin{enumerate}[i.] 
\item  $   {\underline{\emph{mdim}}_{\emph{M}}}([0,1] ,|\cdot |,\phi) \leq  \underset{k\rightarrow \infty}{\liminf}\frac{1}{\left|1-\frac{\log |I_{k}|}{\log s_{k}}\right|}. $  
\item If the limit $\underset{k\rightarrow \infty}{\lim}\frac{1}{\left|1-\frac{\log |I_{k}|}{\log s_{k}}\right|}$ exists, then $\overline{\emph{mdim}}_{\emph{M}}([0,1] ,|\cdot |,\phi) =  \underset{k\rightarrow \infty}{\lim}\frac{1}{\left|1-\frac{\log |I_{k}|}{\log s_{k}}\right|}.$ \end{enumerate}
 \end{theorem}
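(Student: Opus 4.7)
The plan is to complement the lower bound from Lemma~\ref{lema1} with a matching upper bound on $\mathrm{span}(\phi,\varepsilon)$. The key structural observation is that each horseshoe interval $I_k$ is $\phi$-invariant: since the $I_k^i$ partition $I_k$ and each $\phi|_{I_k^i}\colon I_k^i\to I_k$ is an affine bijection, $\phi(I_k)=I_k$, and the dynamics decomposes as the disjoint union of the piecewise affine expanding maps $\phi|_{I_k}$ of constant slope $\pm s_k$.

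On each $I_k$ I would use the natural Markov partition $\bigvee_{j=0}^{n-1}\phi^{-j}\{I_k^1,\dots,I_k^{s_k}\}$ consisting of $s_k^n$ equal-length atoms of size $|I_k|/s_k^n$: on each atom every $\phi^{j}$, $0\le j\le n-1$, is affine of slope $\pm s_k^{j}$, so two points at mutual distance at most $\varepsilon/s_k^{n-1}$ remain $\varepsilon$-close under $n$ iterates. Counting yields $\mathrm{span}(n,\phi|_{I_k},\varepsilon)\le\lceil|I_k|s_k^{n-1}/\varepsilon\rceil+s_k^n$. Since $\sum_k|I_k|=1$, the set $\mathcal{K}(\varepsilon)=\{k:|I_k|\ge\varepsilon\}$ is finite; combining the previous bound on each $k\in\mathcal{K}(\varepsilon)$ with an auxiliary grid of $O(1/\varepsilon)$ points covering the union of the short intervals (a grid point trapped in a short $I_{k_0}$ spans every other short interval whose convex hull with $I_{k_0}$ has length below $\varepsilon$, since forward orbits remain trapped) yields
$$\mathrm{span}(n,\phi,\varepsilon)\ \le\ C(\varepsilon)+\sum_{k\in\mathcal{K}(\varepsilon)}\Bigl(\frac{|I_k|\,s_k^{n-1}}{\varepsilon}+s_k^n\Bigr),$$
with $C(\varepsilon)$ independent of $n$. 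Taking $\tfrac{1}{n}\log\limsup_n$ removes $C(\varepsilon)$ and the polynomial prefactor, and monotonicity of $(s_k)$ collapses the residue to $\log s_{k^{\ast}(\varepsilon)}$ where $k^{\ast}(\varepsilon)=\max\mathcal{K}(\varepsilon)$.

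Finally, evaluating at $\varepsilon_k=|I_k|/s_k$ gives $|\log\varepsilon_k|=\log s_k-\log|I_k|$, and along the extremal sequences of indices $k$ realising the $\liminf$ (resp.\ the $\limsup$) of $\frac{1}{|1-\log|I_k|/\log s_k|}$ one has $k^{\ast}(\varepsilon_k)=k$, whence
$$\frac{\mathrm{span}(\phi,\varepsilon_k)}{|\log\varepsilon_k|}\ \le\ \frac{\log s_k}{\log s_k-\log|I_k|}\ =\ \frac{1}{1-\log|I_k|/\log s_k}.$$
Passing to $\liminf_k$ yields (i); passing to $\limsup_k$ and matching against the lower bound of Lemma~\ref{lema1} yields the equality of (ii). The main obstacle will be constructing the auxiliary $O(1/\varepsilon)$-grid and verifying that it $(n,\phi,\varepsilon)$-spans the countably many short intervals: because distinct $I_k$'s have disjoint forward orbits, a single grid point can span a short interval only when both fit inside one $\varepsilon$-window, so grid points that accidentally land in one of the finitely many large intervals have to be displaced to nearby short intervals; this requires a short combinatorial argument separating the portion of $[0,1]$ covered by large intervals from the rest, and a second delicate point is ensuring that $k^{\ast}(\varepsilon_k)=k$ along the limit sequences, which is where the rearrangement hypothesis $s_k\le s_{k+1}$ enters.
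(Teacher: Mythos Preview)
The paper does not contain a proof of Theorem~\ref{misiu}; it is quoted verbatim from \cite{JeoPMD}, so there is no in-paper argument to compare against. I will therefore assess your outline on its own merits.

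Your decomposition into the invariant blocks $I_k$ and the treatment of the short intervals by a coarse grid are correct and standard. The real gap is in the spanning estimate on each long block. From the Markov partition you obtain
\[
\mathrm{span}(n,\phi|_{I_k},\varepsilon)\ \le\ \frac{|I_k|\,s_k^{\,n-1}}{\varepsilon}+s_k^{\,n},
\]
which after $\tfrac1n\log\limsup_n$ gives only $\mathrm{span}(\phi|_{I_k},\varepsilon)\le\log s_k$, \emph{independent of $\varepsilon$}. That is too crude. Take for instance $|I_k|=2^{-k}$ and $s_k=2^{k}$: the formula in the theorem gives $\tfrac12$, but your bound yields $\mathrm{span}(\phi,\varepsilon)\le\log s_{k^\ast(\varepsilon)}$ with $k^\ast(4^{-m})=2m$, so the ratio tends to $1$, not $\tfrac12$. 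In particular, the assertion $k^{\ast}(\varepsilon_k)=k$ is false here (one has $k^{\ast}(\varepsilon_k)=2k$), and the monotonicity $s_k\le s_{k+1}$ does not rescue it.

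What is missing is the second, $\varepsilon$-sensitive bound: covering $I_k$ by $\lceil 2|I_k|/\varepsilon\rceil$ intervals of length $\le\varepsilon/2$ and refining under $\phi^{-1},\dots,\phi^{-(n-1)}$ gives
\[
\mathrm{span}(n,\phi|_{I_k},\varepsilon)\ \le\ \Bigl\lceil \tfrac{2|I_k|}{\varepsilon}\Bigr\rceil^{n},
\qquad\text{hence}\qquad
\mathrm{span}(\phi|_{I_k},\varepsilon)\ \le\ \log\Bigl\lceil \tfrac{2|I_k|}{\varepsilon}\Bigr\rceil.
\]
Combining the two estimates, $\mathrm{span}(\phi|_{I_k},\varepsilon)\le\min\bigl(\log s_k,\ \log(2|I_k|/\varepsilon)\bigr)$, and a short computation shows that for every $k$ and every $\varepsilon<|I_k|$,
\[
\frac{\min\bigl(\log s_k,\ \log(2|I_k|/\varepsilon)\bigr)}{|\log\varepsilon|}\ \le\ \frac{\log s_k}{\log s_k-\log|I_k|}+o(1).
\]
It is this two-sided control (the $\log s_k$ bound when $\varepsilon\le\varepsilon_k$, the $\log(|I_k|/\varepsilon)$ bound when $\varepsilon>\varepsilon_k$) that collapses the $\max_k$ to the desired expression and makes both~(i) and~(ii) go through for all $\varepsilon$, not just along the subsequence~$\varepsilon_k$. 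Your proposal identifies the right skeleton but omits precisely this ingredient.
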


  In Figure \ref{Ima22} we present the graph of a  continuous map $\phi:[0,1]\rightarrow [0,1]$ such that each $I_{k}$ is an  $3^{k}$-horseshoe for $\phi$.  Note that in   Theorem \ref{misiu},i.  is presented an upper bound for the \textit{lower} metric mean dimension and in  ii. a condition   to obtain   the exact  value of the \textit{upper} metric mean dimension for a certain class of continuous maps on the interval. 
\begin{figure}[ht] 
  \centering
\subfigure[Each $I_{k}$ is an $3^{k}$-horseshoe]{\includegraphics[width=0.34\textwidth]{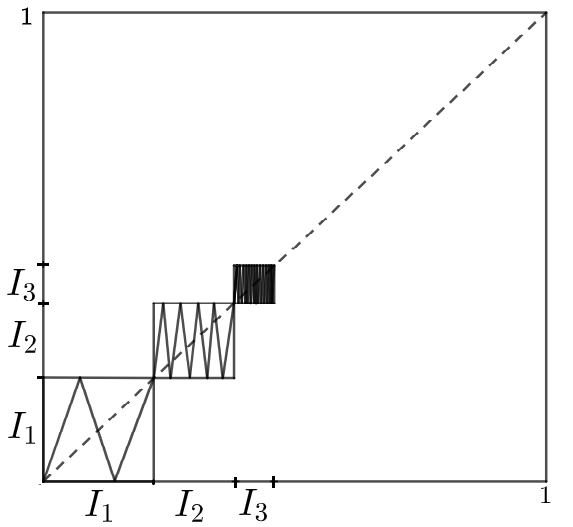} \label{Ima22}}\quad \subfigure[Each $I_{k}$ is an $3^{k}$-horseshoe]{\includegraphics[width=0.34\textwidth]{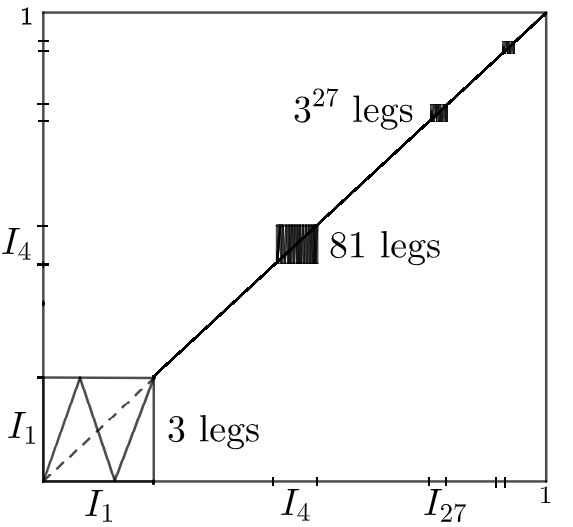}\label{Ima223}}
\caption{$s$-horseshoes}
\end{figure}
 
In the next proposition we show a lower bound for the \textit{lower} metric mean dimension of a continuous map on the interval satisfying the conditions in Theorem \ref{misiu}.
 
\begin{proposition}\label{ddf}
If $\phi:[0,1]\rightarrow [0,1]$ satisfies the conditions in Theorem \ref{misiu}, we have \begin{equation}\label{fefeesc}   {\underline{\emph{mdim}}_{\emph{M}}}([0,1] ,|\cdot |,\phi) \geq \underset{k\rightarrow \infty}{\liminf}\frac{\frac{\log s_{k-1}}{\log s_{k}}}{1 - \frac{\log |I_{k}|}{\log s_{k}}} . \end{equation}
If $\underset{k\rightarrow \infty}{\liminf}\frac{\frac{\log s_{k-1}}{\log s_{k}}}{1 - \frac{\log |I_{k}|}{\log s_{k}}}=  \underset{k\rightarrow \infty}{\liminf}\frac{1}{\left|1-\frac{\log |I_{k}|}{\log s_{k}}\right|},$ we have $$   {\underline{\emph{mdim}}_{\emph{M}}}([0,1] ,|\cdot |,\phi) =  \underset{k\rightarrow \infty}{\liminf}\frac{1}{\left|1 - \frac{\log |I_{k}|}{\log{ s_{k}}}\right|} . $$
\end{proposition}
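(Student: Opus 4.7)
The plan is to refine the horseshoe-scale correspondence used in Theorem~\ref{misiu} by exploiting the monotonicity of $\text{sep}(\phi,\cdot)$ in the scale parameter. Lemma~\ref{lema1} provides separation bounds only at the discrete scales $\varepsilon_k:=|I_k|/s_k$, whereas the lower metric mean dimension demands control at every small $\varepsilon>0$. The key observation is that once a given horseshoe separates orbits at its natural scale $\varepsilon_k$, it continues to separate them at every finer scale; thus for any $\varepsilon\le\varepsilon_k$ we still have $\text{sep}(\phi,\varepsilon)\ge\log(s_k/2)$, by virtue of the trivial inequality $\text{sep}(n,\phi,\varepsilon)\ge\text{sep}(n,\phi,\varepsilon_k)$ when $\varepsilon\le\varepsilon_k$.

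Concretely, for each sufficiently small $\varepsilon>0$ I define $k^{*}(\varepsilon)$ to be the largest index $k$ with $\varepsilon_k\ge\varepsilon$; this exists because $\varepsilon_k\to 0$, and $k^{*}(\varepsilon)\to\infty$ as $\varepsilon\to 0$. Writing $m=k^{*}(\varepsilon)$, monotonicity combined with Lemma~\ref{lema1} yields $\text{sep}(\phi,\varepsilon)\ge\log(s_m/2)$, while the maximality of $m$ forces $\varepsilon_{m+1}<\varepsilon$, hence $|\log\varepsilon|<|\log\varepsilon_{m+1}|$. Dividing and reindexing $k=m+1$, I obtain
$$\frac{\text{sep}(\phi,\varepsilon)}{|\log\varepsilon|}\ \ge\ \frac{\log(s_{k-1}/2)}{|\log\varepsilon_k|}.$$
Taking $\liminf$ as $\varepsilon\to 0$, observing that the constant $\log 2$ is absorbed by $|\log\varepsilon_k|\to\infty$, and expanding $|\log\varepsilon_k|=\log s_k-\log|I_k|=(\log s_k)\bigl(1-\log|I_k|/\log s_k\bigr)$ converts the right-hand side into exactly the expression on the right of \eqref{fefeesc}, which proves the lower bound. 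The second assertion is then immediate: combining the bound just proved with the upper bound of Theorem~\ref{misiu}.i sandwiches $\underline{\text{mdim}}_{\text{M}}([0,1],|\cdot|,\phi)$ between two equal quantities when the hypothesized equality of liminfs holds, forcing the claimed identity.

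The main obstacle I anticipate is that $\{\varepsilon_k\}$ need not be monotone: only $s_k$ is assumed non-decreasing, while $|I_k|$ can fluctuate, so the ratio $|I_k|/s_k$ may oscillate. Consequently $k^{*}(\varepsilon)$ can skip some indices, which means the estimate above is obtained a priori only along a sub-sequence of $k$'s. This is harmless, though, because the liminf along any infinite sub-sequence dominates the liminf over all indices, so any skipping only strengthens the inequality we need. A secondary subtlety, mostly cosmetic, is the case in which $s_k$ is eventually constant: then $\log s_{k-1}/\log s_k=1$ eventually, both sides of \eqref{fefeesc} reduce to $\liminf_k 1/(1-\log|I_k|/\log s_k)$, and the equality in the second half of the statement becomes automatic.
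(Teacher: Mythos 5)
Your proposal is correct and follows essentially the same route as the paper: localize at the horseshoe scales $\varepsilon_k=|I_k|/s_k$, use Lemma~\ref{lema1} together with the monotonicity of $\mathrm{sep}(\phi,\cdot)$ to bound $\mathrm{sep}(\phi,\varepsilon)$ by $\log(s_{k-1}/2)$ while bounding $|\log\varepsilon|$ by $|\log\varepsilon_k|$, then pass to the liminf and invoke Theorem~\ref{misiu}.i for the equality case. Your extra care about the possible non-monotonicity of $\{\varepsilon_k\}$ (the paper simply asserts $\varepsilon\in[\varepsilon_k,\varepsilon_{k-1}]$ for some $k$) is a welcome, correct refinement but not a different argument.
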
 
 
\begin{proof}
  Take any $\varepsilon\in (0,1)$. For any $k\geq 1$ set $ \varepsilon_k=  \frac{|I_{k}|}{s_{k}}  .$   There exists  some $k\geq 1$ such that  $\varepsilon \in [\varepsilon_{k}, \varepsilon_{k-1}]$. We have  
 \begin{equation*}\label{equ12ssswe}   \text{sep}( n, \phi  , \varepsilon)\geq \text{sep}( n, \phi  , \varepsilon_{k-1})\geq   \text{sep}( n, \phi  |_{I_{k-1}}, \varepsilon_{k-1}), \quad\text{for any }n\geq 1,  \end{equation*}
 and thus $ \text{sep}(  \phi  , \varepsilon)\geq     \text{sep}(  \phi  |_{I_{k-1}}, \varepsilon_{k-1}). $
Hence, from Lemma \ref{lema1}, it follows that  
$     
  \text{sep}(\phi, \varepsilon)\geq    \log \left(\frac{s_{k-1}}{2}\right) .$
 Therefore,  \begin{align*}\label{exxample12} {\underline{\text{mdim}}_{\text{M}}}([0,1] ,|\cdot |,\phi) &= \liminf_{\varepsilon\rightarrow 0} \frac{\text{sep}(\phi  ,\varepsilon )}{|\log  {\varepsilon}|} \geq \liminf_{k\rightarrow \infty}\frac{ \log {s_{k-1}}}{|\log \varepsilon_{k}|}\\
 &=\underset{k\rightarrow \infty}{\liminf}\frac{\log s_{k-1}}{\log s_{k} - \log |I_{k}|}=\underset{k\rightarrow \infty}{\liminf}\frac{\frac{\log s_{k-1}}{\log s_{k}}}{1 - \frac{\log |I_{k}|}{\log s_{k}}}.\end{align*} 
 The second part of the proposition follows from the above fact and Theorem \ref{misiu}. 
\end{proof}

As we mentioned above, Lemma  \ref{lema1} provides a lower bound for the upper metric mean dimension of a continuous map on the interval. For instances,  in  \cite{VV}, Proposition 8, is proved if  $\varrho:[0,1]\rightarrow [0,1]$ satisfies the conditions in Lemma  \ref{lema1}, with $s_{k}=k^{k}$ and $|I_{k}|=\frac{6}{\pi^{2} k^{2}}$ for any $k\in \mathbb{N}$, then  \begin{equation}\label{mapavv}\overline{\text{mdim}}_{\text{M}}([0,1] ,|\cdot |,\varrho)=1.\end{equation} This fact is a consequence of \eqref{feff} and \eqref{bounddw}. Next, it  follows from \eqref{fefeesc} that \begin{equation}\label{defeeee} {\underline{\text{mdim}}_{\text{M}}}([0,1] ,|\cdot |,\varrho) \geq \underset{k\rightarrow \infty}{\liminf}\frac{\frac{\log s_{k-1}}{\log s_{k}}}{1 - \frac{\log |I_{k}|}{\log s_{k}}}=\underset{k\rightarrow \infty}{\liminf}\frac{\frac{\log (k-1)^{k-1}}{\log k^{k}}}{1 - \frac{\log |6/\pi^{2}k^{2}|}{\log k^{k}}}=1.\end{equation} Therefore, from  \eqref{mapavv} and \eqref{defeeee} we have $$\overline{\text{mdim}}_{\text{M}}([0,1] ,|\cdot |,\varrho)=\underline{\text{mdim}}_{\text{M}}([0,1] ,|\cdot |,\varrho)=1.$$

We will obtain a  continuous map $\varphi_{0,1}:[0,1]\rightarrow [0,1]$ such that $$   0=   \underline{\text{mdim}}_{\text{M}}([0,1] ,|\cdot |,\varphi_{0,1})<   \overline{\text{mdim}}_{\text{M}}([0,1] ,|\cdot |,\varphi_{0,1})=1.$$

  \begin{example}\label{med1}   Set $a_{0}=0$ and $a_{n}= \sum_{i=1}^{n}\frac{6}{\pi^{2}i^{2}}$ for $n\geq 1$. Set  $I_{n}:=[a_{n-1},a_{n}]$ for any $n\geq 1$.    Let $\varphi_{0,1}\in C^{0}([0,1])$ be defined by \begin{equation*}  \varphi_{0,1} (x) =\begin{cases}
    T_{n^{n}}^{-1}\circ g^{n^{n}}\circ T_{n^{n}} & \text{if }x\in I_{n^{n}},\text{ for }n\geq 1\\
    x & \text{ otherwise} 
  \end{cases},\end{equation*} where   $T_{n}: I_{n}:=[a_{n},a_{n+1}] \rightarrow [0,1] $ is the unique increasing affine map from $I_{n}$   onto $[0,1]$ and   $g(x)= |1-|3x-1||$ for any $x\in[0,1]$.  Each $I_{n^{n}}$ is an     $3^{n^{{n}}}$-horseshoe for $\varphi_{0,1}$ (see Figure \ref{Ima223}).  It  follows from    Theorem \ref{misiu} that  $$ {\overline{\emph{mdim}}_{\emph{M}}}([0,1] ,|\cdot |,\varphi_{0,1}) =   \underset{n\rightarrow \infty}{\lim}\frac{1}{\left|1-\frac{\log |I_{n^{n}}|}{\log 3^{n^{n}}}\right|} =   \underset{n\rightarrow \infty}{\lim}\frac{1}{\left|1-\frac{\log \left|\frac{6}{\pi^{2}(n^{n})^{2}}\right|}{\log 3^{n^{n}}}\right|}=1. $$
  Next, we prove that $$ {\underline{\emph{mdim}}_{\emph{M}}}([0,1] ,|\cdot |,\varphi_{0,1}) =  0. $$ 
  Firstly, given that $\varphi_{0,1}$ is the identity outside of  $Y=\cup_{j=1}^{\infty}I_{j^{j}}$, we have 
  $$  {\underline{\emph{mdim}}_{\emph{M}}}([0,1] ,|\cdot |,\varphi_{0,1})= {\underline{\emph{mdim}}_{\emph{M}}}(Y ,|\cdot |,\varphi_{0,1}|_{Y}). $$
  Next, note that $\frac{\log  3^{(k-1)^{k-1}}}{\log 3^{k^{k}}}\rightarrow 0$ as $k\rightarrow \infty$. Hence, for any $\delta >0$ there exists $k_{0}\geq 1$ such that for any  $k> k_{0}$ we have $\frac{\log  3^{(k-1)^{k-1}}}{\log 3^{k^{k}}}< \delta$. For any $k\geq k_{0}$, set $ \varepsilon_k=  \frac{|I_{k^{k}}|}{3(3^{k^{k}})}  =\frac{6}{3^{k^{k}+1}\pi^{2}k^{2k}} .$  It follows from Corollary 7.2, \cite{demelo}, page 165, for each $j=1, 2,\dots, {k-1}$,   we have   $$ \emph{span}(n,\varphi_{0,1} |_{I_{j^{j}}}, {\varepsilon_{k}})\leq \frac{(3^{j^{j}})^{n}}{\varepsilon_{k}}.$$   Hence, if $Y_{k}=\cup_{j=1}^{k-1}I_{j^{j}}$, for every $ n\geq1$ we have 
  \begin{align*}    \emph{span}(n,\varphi_{0,1} |_{Y_{k}}, {\varepsilon_{k}}) & \leq \sum_{j=1}^{k-1}   \frac{(3^{j^{j}})^{n}}{\varepsilon_{k}}  \leq   \sum_{j=1}^{k-1}   \frac{(3^{(k-1)^{k-1}})^{n}}{\varepsilon_{k}}    \leq  (k-1)   \frac{(3^{(k-1)^{k-1}})^{n}}{\varepsilon_{k}}.  \end{align*}
 Therefore,  
 \begin{align*} \limsup_{n\rightarrow \infty}\frac{\emph{span}(\varphi_{0,1} |_{Y_{k}}  , {\varepsilon_{k}})}{n|\log {\varepsilon_{k}}|}&\leq \limsup_{n\rightarrow \infty}\frac{\log\left[(k-1)   \frac{(3^{(k-1)^{k-1}})^{n}}{\varepsilon_{k}}\right]}{n[\log (3^{k^{k}+1}\pi^{2}k^{2k}/ 6)]} \leq  \frac{\log  3^{(k-1)^{k-1}}}{\log 3^{k^{k}}}.
 \end{align*}
 This fact implies that for any $\delta >0$ we have $$  {\underline{\emph{mdim}}_{\emph{M}}}(Y ,|\cdot |,\varphi_{0,1}|_{Y})<\delta$$ and hence $$ {\underline{\emph{mdim}}_{\emph{M}}}([0,1] ,|\cdot |,\varphi_{0,1})={\underline{\emph{mdim}}_{\emph{M}}}(Y ,|\cdot |,\varphi_{0,1}|_{Y})=0.$$
  \end{example}
 
Fix $a\in [0,1]$ and let $\phi_{a}\in C^{0}([0,1])$ be such that $$ {\underline{\text{mdim}}_{\text{M}}}([0,1] ,|\cdot |,\phi_{a})={\overline{\text{mdim}}_{\text{M}}}([0,1] ,|\cdot |,\phi_{a})=a$$ (two different constructions of this kind of maps can be seen  in  \cite[Example 3.1]{JeoPMD} and \cite[Proposition 9.1]{Carvalho}).  Set $ {\varphi}_{a,1}\in C^{0}([0,1])$ be defined by \begin{equation*}   {\varphi}_{a,1} (x) =\begin{cases}
    T_{1}^{-1}\circ \varphi_{0,1}\circ T_{1} & \text{if }x\in [0,\frac{1}{2}],\\
     T_{2}^{-1}\circ \phi_{a}\circ T_{2} & \text{if }x\in [\frac{1}{2},1]  
  \end{cases},\end{equation*} where   $T_{1}:  [1,\frac{1}{2}] \rightarrow [0,1] $ and $T_{2}:  [\frac{1}{2},1] \rightarrow [0,1] $  are, respectively, the unique increasing affine map from $[0,\frac{1}{2}]$   onto $[0,1]$ and   from $[\frac{1}{2},1]$   onto $[0,1]$.
 We have \begin{equation}\label{a1} {\underline{\text{mdim}}_{\text{M}}}([0,1] ,|\cdot |, {\varphi}_{a,1})=a\quad\text{and}\quad {\overline{\text{mdim}}_{\text{M}}}([0,1] ,|\cdot |,\varphi_{a,1})=1.\end{equation}
 
 Next, for fixed $b \in (0,1)$, we present an example of a continuous map $\varphi_{0,b}\in C^{0}([0,1])$ such that \begin{equation*}\label{a1e2}  {\underline{\text{mdim}}_{\text{M}}}([0,1] ,|\cdot |, {\varphi}_{0,b})=0\quad\text{and}\quad {\overline{\text{mdim}}_{\text{M}}}([0,1] ,|\cdot |,\varphi_{0,b})=b.\end{equation*}
 
  \begin{example}\label{med12}  Fix  $r>0$ and set $ b=\frac{1}{r+1}$.  Set $a_{0}=0$ and $a_{n}= \sum_{i=0}^{n-1}\frac{C}{3^{ir}}$ for $n\geq 1$, where $C=\frac{1}{\sum_{i=0}^{\infty}\frac{1}{3^{ir}}}= \frac{3^{r}-1}{3^{r}}$.     Let $\varphi_{0,b}\in C^{0}([0,1])$ be defined by \begin{equation*}  \varphi_{0,b} (x) =\begin{cases}
    T_{n^{n}}^{-1}\circ g^{n^{n}}\circ T_{n^{n}} & \text{if }x\in I_{n^{n}},\text{ for }n\geq 1\\
    x & \text{ otherwise} 
  \end{cases},\end{equation*} where   $T_{n}: I_{n}:=[a_{n-1},a_{n}] \rightarrow [0,1] $ is the unique increasing affine map from $I_{n}$   onto $[0,1]$ and   $g(x)= |1-|3x-1||$ for any $x\in[0,1]$.  Each $I_{n^{n}}$ is a     $3^{n^{{n}}}$-horseshoe for $\varphi_{0,b}$.  It  follows from    Theorem \ref{misiu} that  $$ {\overline{\emph{mdim}}_{\emph{M}}}([0,1] ,|\cdot |,\varphi_{0,b}) =   \underset{n\rightarrow \infty}{\lim}\frac{1}{\left|1-\frac{\log |I_{n^{n}}|}{\log 3^{n^{n}}}\right|} =   \underset{n\rightarrow \infty}{\lim}\frac{1}{\left|1+\frac{\log (3^{n^{n}})^{r}}{\log 3^{n^{n}}}\right|}=\frac{1}{1+r}=b. $$
  We can prove that $$ {\underline{\emph{mdim}}_{\emph{M}}}([0,1] ,|\cdot |,\varphi_{0,b}) =  0$$  analogously as in Example \ref{med1}.
  \end{example}
  
  For $a<b$, let $\phi_{a}\in C^{0}([0,1])$ be such that $$ {\underline{\text{mdim}}_{\text{M}}}([0,1] ,|\cdot |,\phi_{a})={\overline{\text{mdim}}_{\text{M}}}([0,1] ,|\cdot |,\phi_{a})=a.$$   Set $ {\varphi}_{a,b}\in C^{0}([0,1])$ be defined by \begin{equation*}   {\varphi}_{a,b} (x) =\begin{cases}
    T_{1}^{-1}\circ \varphi_{0,b}\circ T_{1} & \text{if }x\in [0,\frac{1}{2}],\\
     T_{2}^{-1}\circ \phi_{a}\circ T_{2} & \text{if }x\in [\frac{1}{2},1]  
  \end{cases}.\end{equation*} 
 We have \begin{equation}\label{ab1} {\underline{\text{mdim}}_{\text{M}}}([0,1] ,|\cdot |, {\varphi}_{a,b})=a\quad\text{and}\quad {\overline{\text{mdim}}_{\text{M}}}([0,1] ,|\cdot |,\varphi_{a,b})=b.\end{equation}

We will endow  $C^{0}([0,1])$   with the metric 
   $$\hat{d}(\phi,\psi)=\max_{x\in [0,1]}|\phi(x)-\psi(x)|.
$$  
In \cite{VV}, Proposition 9, is proved the set consisting of $\phi\in C^{0}([0,1])$ with $\overline{\text{mdim}}_{\text{M}}([0,1],|\cdot|,\phi)=1$ is dense in $  C^{0}([0,1])$.  More generally, in \cite{Carvalho}, Theorem B, is proved for a fixed $a\in [0,1]$, the set consisting of continuous maps $\phi\in C^{0}([0,1])$ such that $$      \underline{\text{mdim}}_{\text{M}}([0,1] ,|\cdot |,\phi)= \overline{\text{mdim}}_{\text{M}}([0,1] ,|\cdot |,\phi)=a$$ is dense in 
   $C^{0}([0,1])$ (see also \cite{JeoPMD}, Theorem 4.1).    Furthermore, we have: 
  \begin{theorem}
  For   $a,b\in [0,1]$, with $a<b$, the set $C_{a}^{b}([0,1])$ consisting of continuous maps $\phi:[0,1]\rightarrow [0,1]$ such that $$      \underline{\emph{mdim}}_{\emph{M}}([0,1] ,|\cdot |,\phi)=a\quad\text{and}\quad    \overline{\emph{mdim}}_{\emph{M}}([0,1] ,|\cdot |,\phi)=b$$ is dense in 
   $C^{0}([0,1]).$
  \end{theorem}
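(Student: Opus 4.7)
My approach would be to leverage the explicit map $\varphi_{a,b}$ from \eqref{ab1}, which already realises the pair $(\underline{\text{mdim}}_{\text{M}},\overline{\text{mdim}}_{\text{M}})=(a,b)$, and graft a suitably rescaled copy of it into an arbitrary $\psi\in C^0([0,1])$. The plan is to perform the graft on a small forward invariant interval so that the resulting $\phi$ stays within $\varepsilon$ of $\psi$ in the sup-metric $\hat d$ yet still carries the horseshoes needed to force $\underline{\text{mdim}}_{\text{M}}(\phi)=a$ and $\overline{\text{mdim}}_{\text{M}}(\phi)=b$.

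In detail, given $\psi\in C^0([0,1])$ and $\varepsilon>0$, I would first approximate $\psi$ by a piecewise linear map $\tilde\psi_1$ with $\hat d(\tilde\psi_1,\psi)<\varepsilon/3$; since $\tilde\psi_1$ is Lipschitz, the comment from the introduction based on \cite{Katok}, Theorem 3.2.9, gives $\overline{\text{mdim}}_{\text{M}}([0,1],|\cdot|,\tilde\psi_1)=0$. Next, I would pick a fixed point $p$ of $\tilde\psi_1$ (which exists by the intermediate value theorem applied to $\tilde\psi_1-\mathrm{id}$) and choose $\delta'>0$ very small compared with $\varepsilon$; then modify $\tilde\psi_1$ on a slightly bigger neighborhood of $p$ into a piecewise linear $\tilde\psi_2$ that equals the identity on $K:=[p-\delta',p+\delta']$ and agrees with $\tilde\psi_1$ outside $[p-2\delta',p+2\delta']$, using linear interpolation on the two transition arcs and the Lipschitz estimate $|\tilde\psi_1(x)-p|\le L|x-p|$ to control the sup-distance. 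Finally, I would replace $\tilde\psi_2$ on $K$ by $T\circ\varphi_{a,b}\circ T^{-1}$, where $T:[0,1]\to K$ is the increasing affine bijection; after a harmless modification of $\varphi_{a,b}$ to arrange $\varphi_{a,b}(0)=0$ and $\varphi_{a,b}(1)=1$ (easily done without touching the horseshoe structure that produces \eqref{ab1}), the resulting map $\phi$ is continuous, satisfies $\hat d(\phi,\psi)<\varepsilon$, and makes $K$ forward invariant.

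To verify that $\phi\in C_a^b([0,1])$, observe first that $\phi|_K$ is affinely conjugate to $\varphi_{a,b}$, and since the multiplicative constant introduced by an affine rescaling is absorbed by $|\log\varepsilon|$, metric mean dimension is insensitive to it; hence $\phi|_K$ inherits the pair $(a,b)$, and $\text{sep}(n,\phi|_K,\varepsilon)\le\text{sep}(n,\phi,\varepsilon)$ gives immediately $\underline{\text{mdim}}_{\text{M}}(\phi)\ge a$ and $\overline{\text{mdim}}_{\text{M}}(\phi)\ge b$. The matching upper bounds are the hard part, since $[0,1]\setminus K$ is not forward invariant and a point outside $K$ may spend several iterates under the Lipschitz map $\tilde\psi_2$ before entering $K$ and being subjected to the rich $\phi|_K$-dynamics. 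The plan for handling this is to sort points by their first entry time into $K$, bounding the portion of the spanning set that stays outside $K$ by $L^n/\varepsilon$ (in the spirit of Corollary 7.2 of \cite{demelo} already used in Example \ref{med1}) and the portion inside $K$ by $\text{span}(n,\phi|_K,\varepsilon)$; dividing by $n|\log\varepsilon|$ and letting $\varepsilon\to 0$ the Lipschitz contribution drops out and only $\phi|_K$ survives, giving $\underline{\text{mdim}}_{\text{M}}(\phi)\le a$ and $\overline{\text{mdim}}_{\text{M}}(\phi)\le b$.
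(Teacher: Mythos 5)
Your proposal is correct and follows essentially the same route as the paper: approximate $\psi$ by a regular (Lipschitz/$C^{1}$) map with zero metric mean dimension, locate a fixed point, graft an affinely rescaled copy of $\varphi_{a,b}$ from \eqref{ab1} onto a small forward invariant interval around it with affine bridge pieces, and conclude $\hat{d}(\phi,\psi)<\varepsilon$ while the grafted block forces the pair $(a,b)$. The only difference is one of emphasis: you spell out the first-entry-time argument needed for the upper bounds $\underline{\text{mdim}}_{\text{M}}\leq a$, $\overline{\text{mdim}}_{\text{M}}\leq b$, a verification the paper's proof states without detail.
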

  \begin{proof}  From \eqref{a1} and  \eqref{ab1} we have for any $a,b\in[0,1]$, with $a\leq b$, there exists $\varphi_{a,b}\in C_{a}^{b}([0,1])$. 
 Fix $\psi\in C^{0}([0,1])$ and take any $\varepsilon >0$. Given that $C^{1}([0,1])$ is dense in $C^{0}([0,1])$, we can assume that $\psi\in C^{1}([0,1])$ and therefore $$      \underline{\text{mdim}}_{\text{M}}([0,1] ,|\cdot |,\phi)=     \overline{\text{mdim}}_{\text{M}}([0,1] ,|\cdot |,\phi)=0.$$  
 Let $p^{\ast}$ be a fixed point of $\psi$.   Choose  $\delta>0$ such that $|\psi(x)-\psi (p^{\ast})|<\varepsilon/2$ for any $x$ with $|x-p^{\ast}|<\delta$.  Take $\varphi_{a,b}\in C_{a}^{b}([0,1])$. Set  $J_{1}=[0,p^{\ast}]$, $J_{2}=[p^{\ast},p^{\ast}+\delta/2]$, $J_{3}=[p^{\ast}+\delta/2,p^{\ast}+\delta] $ and $J_{4}=[p^{\ast}+\delta,1]$. Take the continuous map $\psi_{a,b}:[0,1]\rightarrow [0,1]$ defined as $$
\psi_{a.b}(x)= \begin{cases}
    \psi(x), &  \text{ if }x\in J_{1}\cup J_{4}, \\
   T_{2}^{-1}\varphi_{a,b}T_{2}(x), &  \text{ if }x\in J_{2}, \\
    T_{3}(x), &  \text{ if }x\in J_{3}, 
      \end{cases}
$$ where    $T_{2}:J_{2}\rightarrow [0,1] $  is the  affine map such that $T_{2}(p^{\ast})=0 $ and $T_{2}(p^{\ast}+\delta/2)=1 $,   and  $ T_{3}:J_{3}\rightarrow [ p^{\ast}+\delta/2, \psi(p^{\ast}+\delta)]$ is the  affine map   such that $ T_{3}(p^{\ast}+\delta/2)=p^{\ast}+\delta/2$  and $ T_{3} (p^{\ast}+\delta)=\psi(p^{\ast}+\delta)$.  Note that $\hat{d}(\psi_{a,b},\psi)<\varepsilon.$ We have  $$\underline{\text{mdim}}_\text{M}([0,1] ,|\cdot |,\psi_{a,b})
= \underline{\text{mdim}}_\text{M}(J_{2} ,|\cdot |,\varphi_{a,b}) 
=a$$
and $$ \overline{\text{mdim}}_\text{M}([0,1] ,|\cdot |,\psi_{a,b})
= \overline{\text{mdim}}_\text{M}(J_{2} ,|\cdot |,\varphi_{a,b}) 
=b,$$ 
which proves the theorem.
 \end{proof}

 \section{H\"older continuous maps with positive metric mean dimension}\label{section4}
 
   We say that $\phi:[0,1]\rightarrow [0,1]$ is an $\alpha$-\textit{H\"older continuous map}, for $\alpha\in (0,1]$, if there exists $K>0$ such that
$$ \frac{|\phi(x)-\phi(y)|}{|x-y|^{\alpha}}\leq K\quad \text{for all }x, y\in [0,1],\text{ with }x\neq y.    $$
If $\phi$ satisfies the above condition for  $\alpha =1$, then $\phi $ is called a \textit{Lipschitz continuous map}.
 
\medskip

 For $\alpha\in (0,1),$   $ H^{\alpha}([0,1])$  will denote  the space of $\alpha$-H\"older continuous maps on $[0,1]$. $C^{1}([0,1])$ and   $H^{1}([0,1])$ will   denote respectively the space of $C^{1}$-maps and the space of Lipschitz continuous maps on $[0,1]$.  We have $$C^{1}([0,1])\subset H^{1}([0,1])\subset H^{\beta}([0,1]) \subset H^{\alpha}([0,1]) \subset C^{0}([0,1]),\quad\text{ where }0<\alpha<\beta<1.$$

  
 Next, suppose that $N$ is a compact Riemannian manifold with topological dimension equal to $n$ and  Riemannian metric $d$. In \cite{JeoPMD}, Theorem 4.5,  is proved  the set consisting of continuous maps on $N$ with \textit{lower} and \textit{upper} metric mean dimension equal to a fixed $a\in [0,n]$ is dense in $C^{0}(N)$. Furthermore, in   Theorem 4.6, is showed the  set consisting of continuous maps on $N$ with \textit{upper} metric mean dimension equal to $n$ is residual in $C^{0}(N)$. If $n\geq 2$,  in \cite{Carvalho}, Theorem A, is proved  the set consisting of homeomorphisms on $N$ with \textit{upper} metric mean dimension equal to $n$ is residual in the set consisting of homeomorphisms on $N$. It is well known any homeomorphism on $[0,1]$ has zero entropy and therefore has zero metric mean dimension. 
 
 \medskip
 
 Hence, it remains to show the existence of   H\"older continuous maps on finite dimensional compact manifolds. In Theorem \ref{example-a}  we will prove   there exist H\"older continuous maps on the interval  with positive metric mean dimension, with certain conditions of the H\"older exponent.   In Conjectures A, B and C the authors leave three problems that can be objects of future studies.

 \medskip

The next lemma, whose proof is straightforward and
left to the reader, will be useful in order to prove that some functions are H\"older continuous.

\begin{lemma}\label{hfnff} Let $\phi:[0,1]\rightarrow [0,1]$ be a continuous map such that $ \frac{|\phi(x)-\phi(y)|}{|\omega(x-y)|}\leq K$ for some $K>0$ and any $x\neq y\in [0,1]$, where $\omega(t)=-t\log(t)$  for any $ t>0$ and $\omega (t)=0$. Then $\phi$ is an  $\alpha$-H\"older continuous map for any $\alpha\in (0,1)$. 
\end{lemma}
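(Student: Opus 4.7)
The plan is to bound the ratio $|\phi(x)-\phi(y)|/|x-y|^\alpha$ directly from the hypothesis. Interpreting $|\omega(x-y)|$ as $\omega(|x-y|)=-|x-y|\log|x-y|$ (which is nonnegative since $|x-y|\le 1$), the assumption reads
$$|\phi(x)-\phi(y)| \le -K\,|x-y|\,\log|x-y|$$
for all $x\ne y$ in $[0,1]$. Dividing through by $|x-y|^\alpha$, this becomes
$$\frac{|\phi(x)-\phi(y)|}{|x-y|^\alpha} \le K\,|x-y|^{1-\alpha}\,\bigl|\log|x-y|\bigr|.$$

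The key analytic step is to observe that, for any fixed $\alpha\in(0,1)$, the function $g:(0,1]\to\mathbb{R}$ defined by $g(t)=t^{1-\alpha}|\log t|$ is bounded. Indeed, $g(1)=0$ and $g(t)\to 0$ as $t\to 0^+$ because the factor $t^{1-\alpha}$, with $1-\alpha>0$, dominates the logarithmic factor. Since $g$ is continuous on $(0,1]$ and extends continuously to $0$, it attains a finite maximum $M_\alpha$. A short calculation via $g'(t)=0$ gives the critical point $t_\ast=e^{-1/(1-\alpha)}$ and $M_\alpha=\frac{1}{e(1-\alpha)}$, but the explicit value is not needed.

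Combining the two bounds yields
$$|\phi(x)-\phi(y)| \le K M_\alpha\,|x-y|^\alpha \qquad\text{for all }x,y\in[0,1],$$
so $\phi$ is $\alpha$-H\"older continuous with constant $K M_\alpha$. There is no real obstacle here; the only point worth flagging is that the argument breaks down precisely at $\alpha=1$, where $g(t)=|\log t|$ is unbounded on $(0,1]$. This is consistent with the fact that the conclusion of the lemma covers only $\alpha\in(0,1)$, and explains why a modulus of continuity of type $\omega(t)=-t\log t$ is genuinely weaker than Lipschitz regularity.
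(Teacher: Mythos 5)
Your proof is correct: bounding $t^{1-\alpha}\lvert\log t\rvert$ on $(0,1]$ and absorbing its maximum $M_\alpha=\tfrac{1}{e(1-\alpha)}$ into the H\"older constant is exactly the straightforward argument the paper has in mind (it explicitly leaves the proof to the reader), and your remark that the bound fails at $\alpha=1$ correctly explains why the conclusion is restricted to $\alpha\in(0,1)$. Nothing further is needed.
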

 
 \begin{definition} Let $\phi:[0,1]\rightarrow [0,1]$ be a continuous map. If there exists $K>0$ such that  $ \frac{|\phi(x)-\phi(y)|}{|\omega(x-y)|}\leq K$ for any $x\neq y\in [0,1]$,  we will say that $\phi$ has \textit{modulus of continuity} $\omega$. 
\end{definition}
 
     Next example, which was  introduced in \cite{Hazard}, proves there exist continuous maps  with infinite entropy    and  metric mean dimension equal to zero. That map is also   $\alpha$-H\"older  for any $\alpha\in (0,1)$. We will  include the details of its construction for the sake of completeness. 

\begin{example}\label{equaltozero} For $n\geq 1$, take   $I_{n}=[2^{-n},2^{-n+1}]$. Note that $$|I_n|=2^{-n+1}-2^{-n}=2^{-n}(2-1)=2^{-n} \quad\text{for each }n.$$ Divide each interval $I_{n}$ into $2n+1$ sub-intervals with the same lenght, $I_{n}^{1}$, $\dots$, $I_{n}^{2n+1}$. For $k=1,3,\dots, 2n+1$, let $\phi|_{I_{n}^{k}} :I_{n} ^{k}\rightarrow I_{n} $ be  the unique increasing affine map from $I_{n} ^{k}$ onto $I_{n}$ and for $k=2,4,\dots, 2{n}$, let $\phi|_{I_{n}^{k}} :I_{n} ^{k}\rightarrow I_{n} $ be  the unique decreasing affine map from $I_{n} ^{k}$ onto $I_{n}$.  Note that $\phi$ is a continuous map ($\phi(x)=x$ for any $x\in \partial I_{n}$) and  each $I_{n}$ is a $(2n+1)$-horseshoe  (see Figure \ref{Ima1}), therefore $h_{\emph{top}}(\phi)=\infty$. It follows from Theorem \ref{misiu} that $$\emph{mdim}_{\emph{M}}([0,1] ,| \cdot |,\phi)=0.$$  We will prove 
  $\phi$ is $\alpha$-H\"older for any $\alpha\in(0,1).$ We will consider the map  $\omega $   defined in Lemma \ref{hfnff}. \begin{figure}[ht] 
  \centering
\includegraphics[width=0.28\textwidth]{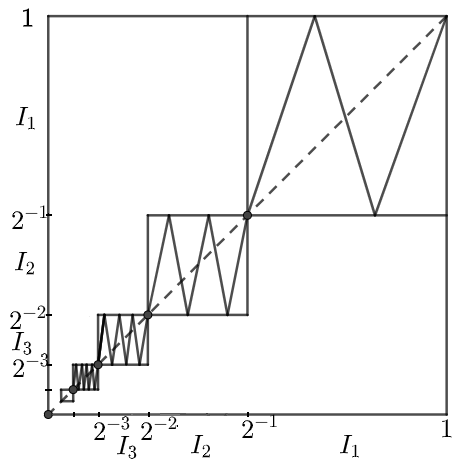}
\caption{Each $ I_{n}$ is a $(2n+1)$-horseshoe}
  \label{Ima1}
\end{figure}
Let $x,y\in [0,1]$ be two distinct points.  Then there exist $n\geq 1$ and $m\geq 1$ such that $x\in I_{n}$ and $y\in I_{m}$. Given that every $I_{k}$ is $\phi$-invariant, we have $\phi(x)\in I_{n}$ and $\phi(y)\in I_{m}$. We have the next cases:

\medskip

\noindent \textbf{Case} $x=0:$ We have $\phi(0)=0$ and hence $|\phi(0)-\phi(y)|=|\phi(y)|\leq 2^{-m+1}$. Furthermore,  $$  2^{-m}\leq|y|\leq2^{-m+1} \quad\text{ and thus }\quad \frac{1}{|y|}\leq 2^{m}\quad\text{and}\quad \frac{1}{\log\frac{1}{|y|}}\leq \frac{1}{\log 2^{m-1} } . $$ Hence, $$ \frac{ |\phi(y)|}{\omega(|y|)}<\frac{2^{-m+1}2^{m}}{\log 2^{m-1}} =\frac{2}{(m-1)\log 2}.$$

\noindent \textbf{Case} $n> m+1:$  In this case we have 
$$ |\phi(x)-\phi(y)|\leq |2^{-n}-2^{-m+1}|< 2^{-m+1}   \quad\text{and} \quad 2^{-m+1}>|x-y|>|2^{-n+1}-2^{-m}|>2^{-m} .$$ 
Therefore, $$ \frac{1}{|x-y|}<2^{m}\quad\text{and}\quad \frac{1}{\log\frac{1}{|x-y|}}< \frac{1}{\log 2^{m-1} }  $$ and thus
$$ \frac{ |\phi(x)-\phi(y)|}{\omega(|x-y|)}<\frac{2^{-m+1}2^{m}}{\log 2^{m-1}} =\frac{2}{(m-1)\log 2}.$$ 

 \noindent  \textbf{Case} $n=m+1:$ Note that $I_m=[2^{-m}, 2^{-m+1}],$ $ I_{m+1}=[2^{-(m+1)}, 2^{-m}]$ and consider the sub-intervals \begin{align*}
I^1_{m}&=\biggl[2^{-m}, 2^{-m}+\frac{|I_{m}|}{2m+1}\biggr]=\biggl[2^{-m}, 2^{-m}+\frac{2^{-m}}{2m+1}\biggr]=\biggl[2^{-m}, \frac{2m+2}{2^m(2m+1)}\biggr]=[B_m, C_m]\subseteq I_{m}
\end{align*} and 
\begin{align*}
I^{2m+1}_{m+1} &=\biggl[2^{-m}-\frac{1}{2^{m+1}(2m+3)},2^{-m}\biggr]=\biggl[\frac{4m+5}{2^{m+1}(2m+3)}, 2^{-m}\biggr]=[A_m, B_m]\subseteq I_{m+1}.
\end{align*}
For any $x\in I^{2m+1}_{m+1}$, we have 
\begin{align*}
\phi  (x)&= \frac{2^{-m}-2^{-(m+1)}}{2^{-m}-A_m}\ (x-2^{-m})+2^{-m}=\frac{2^{-m}-2^{-(m+1)}}{2^{-m}-\frac{4m+5}{2^{m+1}(2m+3)}}\ (x-2^{-m})+2^{-m}\\
&=(2m+3)(x-2^{-m})+2^{-m}.
\end{align*}
For any $x\in I^{1}_{m}$, we have
\begin{align*}
\phi (x)&= \frac{2^{-m+1}-2^{-m}}{C_m-2^{-m}} \ (x-2^{-m})+2^{-m}=(2m+1)(x-2^{-m})+2^{-m}.
\end{align*}
Hence, if $x\in I^{2m+1}_{m+1}$ and $y\in I^{1}_{m}$, we have \begin{align*}
|x-y|\leq C_m-A_m &=\frac{2m+2}{2^{m}(2m+1)}-\frac{4m+5}{2^{m+1}(2m+3)}\\
& =\frac{1}{2^m}\biggl[\frac{2m+1}{2m+1}+\frac{1}{2m+1}-\frac{2(2m+3)}{2(2m+3)}+\frac{1}{2(2m+3)}\biggr]\\
&=\frac{1}{2^m}\biggl[\frac{1}{2m+1}+\frac{1}{2(2m+3)}\biggr]\\
&\leq \frac{1}{2^m}\ \frac{2}{2m+1}=\frac{1}{2^{m-1}(2m+1)}
\end{align*} and, furthermore,
$$
|\phi(y)-\phi(x)|= (2m+1)(y-2^{-m})-(2m+3)(x-2^{-m})\leq (2m+3)(y-x).$$
Therefore, 
\begin{align*}
\frac{|\phi(y)-\phi(x)|}{\omega(|x-y|)}& \leq \frac{(2m+3)(y-x)}{-|x-y| \log |x-y|}\leq \frac{2m+3}{\log (2^{m-1}(2m+1))}.
\end{align*}
Given that 

$$\lim_{m\to \infty}\frac{2m+3}{(m-1)\log 2+ \log (2m+1) }=\frac{2}{\log 2},$$
we have 
\begin{equation}\label{escfr}
\frac{|\phi(y)-\phi(x)|}{\omega(|x-y|)}\leq 3.
\end{equation} 

If $x\in I^{k}_{m+1}$ and $y\in I^{j}_{m}$, then there are ${x}'\in I^{2m+1}_{m+1}$ and ${y}'\in I^{1}_{m}$ such that $\phi(x)= \phi({x}')$ and $\phi(y)= \phi({y}')$.   We have 
$$
 |{x}'-{y}'|\leq |x-y|\leq C_m-A_m= \frac{1}{2^m}\ \frac{2}{2m+1}=\frac{1}{2^{m-1}(2m+1)}<e^{-1}, \quad\text{for all } \ m\geq 2.
$$ Since $\omega(t)$ is increasing on $[0, e^{-1}]$, it  follows that
 $ \omega (|{x}'-{y}'|)\leq \omega (|x-y|)$ and therefore, by \eqref{escfr}, we have
$$  \frac{|\phi(y)-\phi(x)|}{\omega(|y-x|)}\leq  \frac{|\phi({y}')-\phi({x}')|}{\omega(|{y}'-{x}'|)}\leq 3.$$
 Note also, if $m=1$, then $\phi|_{I_1\cup I_2}$ is a Lipschitz function. Hence,  in particular, it has modulus of  continuity $\omega $.  
 
\medskip

\noindent \textbf{Case} $n=m$: In this case we have there exists $y^{\prime}$ in the same branch as $x$ such that $f(y^{\prime})=f(y)$ and $|x-y|\geq |x-y^{\prime}|$. Note that  $$ |x-y^{\prime}|\leq \frac{|I_{n}|}{2n+1}=\frac{1}{2^{n}(2n+1)} \quad\text{and hence }\quad \frac{1}{\log(|x-y^{\prime}|^{-1})}\leq \frac{1}{\log(2^{n}(2n+1))} .  $$ Therefore, \begin{align*}\frac{ |\phi(x)-\phi(y)|}{\omega(|x-y|)}& \leq \frac{ |\phi(x)-\phi(y^{\prime})|}{\omega(|x-y^{\prime}|)} = \frac{(2n+1)|x-y^{\prime}|}{|x-y^{\prime}|\log(|x-y^{\prime}|^{-1})} = \frac{2n+1}{\log(2^{n}(2n+1))}.
\end{align*}

In each case we have $\frac{ |\phi(x)-\phi(y)|}{\omega(|x-y|)}$ is bounded. Therefore, $\phi$ is an $\alpha$-H\"older continuous map for any $\alpha\in (0,1)$.
\end{example}

We recall   the map $\varrho$  presented in \eqref{mapavv} has (lower and upper) metric mean dimension equal to 1. In that case, for any $k\geq 1$, $\varrho$  has  a $k^{k}$-horseshoe with length  equal to $ \frac{6}{\pi^{2}k^{2}}$. We can prove that $\varrho$ is not $\alpha$-H\"older for none $\alpha\in (0,1)$, because the number of legs of each  horseshoe is so big, and therefore the slopes of the map on each subinterval increases very quickly.   Next, we will  present a path (depending of the length of the horseshoes) consisting of  continuous maps such that each of them have  less legs than $\varrho$, their metric mean dimension  is equal to one, however, they are  not $\alpha$-H\"older for none $\alpha\in (0,1)$.  
 
\begin{example}\label{1example}  Fix any  $\beta\in (1,\infty)$ and take $g\in C^{0}([0,1])$, defined by $x\mapsto |1-|3x-1||$. Set  $a_{n}=\sum_{k=1}^{n}\frac{C}{k^{\beta}}$ for $n\geq 1$, where $C=\frac{1}{\sum_{k=1}^{\infty} {k^{-\beta}}}$. For each $n\geq 2$, let 
 $T_{n}: I_{n}:=[a_{n-1},a_{n}] \rightarrow [0,1] $ be the unique increasing affine map from $I_{n}$  onto $[0,1]$.   
Consider the map  $\phi_{\beta}:[0,1]\rightarrow [0,1]$ defined by   $$\phi_{\beta}|_{I_{n}}= T_{n}^{-1}\circ g^{{n}}\circ T_{n}\quad \text{ for each }n\geq 2.$$ Hence, each $I_{n}$, whose length is $\frac{C}{n^{\beta}}$, is a $3^{n}$-horseshoe for $\phi$. From Theorem \ref{misiu} and Proposition \ref{ddf} we have $$ \underline{\emph{mdim}}_{\emph{M}}([0,1] ,|\cdot |,\phi_{\beta})= \overline{\emph{mdim}}_{\emph{M}}([0,1] ,|\cdot |,\phi_{\beta})=1.$$  
Hence, $\{\phi_{\beta}\}_{\beta\in(1,\infty)}$ is a path of continuous map with metric mean dimension equal to 1. 
Next, we will prove that $\phi$ is not $\alpha$-H\"older for none $\alpha\in (0,1)$. Note that each $I_{n}$ can be divided into $3^{n}$ sub-intervals with the same length, $I_{n,1}, I_{n,2},\dots, I_{n,3^{{n}}}$ such that for any $j\in\{1,2\dots,3^{{n}}\}$ we have
$$\phi_{\beta}(x)=\begin{cases}  3^{{n}}(x-a_{n-1})-(j-1)(a_{n}-a_{n-1})+a_{n-1} & \text{ if }x\in I_{n, j} \text{ for } j \text{ odd }\\
-3^{{n}}(x-a_{n-1})+(j-1)(a_{n}-a_{n-1})+a_{n} &   \text{ if }x\in I_{n, j} \text{ for } j \text{ even}.\end{cases}$$
Hence, for each $x,y\in I_{n,j}$, we have $$ |x-y|\leq {|I_{n,j}|}= \frac{|I_{n}|}{3^{{n}}}=  \frac{C}{3^{{n}}[n^{\beta}]}\quad\text{and}\quad  |\phi_{\beta}(x) -\phi_{\beta}(y)|=3^{{n}}|x-y|. $$
Therefore, if $x=a_{n-1}\in I_{n,1}  $ and $y=a_{n-1}+\frac{a_{n}-a_{n-1}}{3^{{n}}} \in I_{n,1}$, we have
$$ \frac{|\phi_{\beta}(x)-\phi_{\beta}(y)|}{|x-y|^{\alpha}}= 3^{{n}}|x-y|^{1-\alpha}\leq \frac{3^{{n}}C^{1-\alpha}}{3^{(1-\alpha){n}}[n^{\beta}]^{(1-\alpha)}} = \frac{3^{\alpha {n}}C^{1-\alpha}}{[n^{\beta}]^{(1-\alpha)}} \rightarrow \infty $$ as $n\rightarrow \infty$. Thus, $\phi_{\beta}$ is not $\alpha$-H\"older. 
\end{example}

In the above example, we can see for every ${\beta}$ and $\gamma$ in $(1,\infty)$, we have $\phi_{\beta}$ and $\phi_{\gamma}$ are topologically conjugate and they have the same metric mean dimension. As we mentioned in Remark
\ref{kjjj},  the metric mean dimension is not invariant under topological conjugacy. In \cite{JeoPMD}, Remark 3.2, is showed a path of continuous maps, $\{\phi_{\beta}\}_{\beta\in (0,\infty)}$, such that for every ${\beta}$ and $\gamma$ in $(0,\infty)$,   $\phi_{\beta}$ and $\phi_{\gamma}$ are topologically conjugate, however, $$ \text{mdim}_{\text{M}}([0,1],|\cdot|,\phi_{\beta})\neq \text{mdim}_{\text{M}}([0,1],|\cdot|,\phi_{\gamma})\quad\text{if }\beta\neq \gamma. $$
 
 \medskip

In the next theorem we prove the existence of H\"older continuous map on the interval which have positive metric mean dimension. Note for the map presented in Example \ref{equaltozero}, the increase of the number of legs, which is $2n+1$ for each $n\geq 1$, is very slow compared to the decrease in size of the horseshoes, which is $\frac{1}{2^{n}}$ for each $n\geq 1$.  Therefore, that map has metric mean dimension equal to zero, from Theorem \ref{misiu} and Proposition \ref{ddf}. If we keep the same subintervals $I_{n}=[2^{-n},2^{-n+1}]$ and we wish to obtain a continuous map on $[0,1]$ with positive metric mean dimension,  we must have $\underset{n\rightarrow \infty}{\limsup}\frac{\log 2^{n}}{\log s_{n}}<\infty$, where $s_{n}$ is the number of legs of the map on each $I_{n}$. This fact implies that $s_{n}$ must increase very quickly and therefore, the H\"older exponent of the map must be zero or very small. Hence,  we must modify the size of the horseshoe in order to obtain a continuous map with positive metric mean dimension and   a greater H\"older exponent: the number of legs must not increase so fast (otherwise the H\"older exponent decreases) and the size of the horseshoe must not decay too fast (otherwise the metric mean dimension decreases). In the next theorem, is presented an example where the size of the horseshoe is $\frac{C}{3^{nr}}$, for a fixed $r\in(0,\infty)$ and a constant $C>0$, and the number of legs is $3^{n}$, for any $n\in\mathbb{N}$.  These sequences, $\frac{C}{3^{nr}}$ and $3^{n}$, are reasonably similar and we will prove that in this case the map has positive metric mean dimension and a considerable H\"older exponent.

 \begin{theorem}\label{example-a}  Fix $ a\in [0,1)$ and take  $\alpha= 1-a$. There exists an $\alpha$-H\"older continuous map $\phi_{a}:[0,1]\rightarrow [0,1]$   such that $$   \underline{\emph{mdim}}_{\emph{M}}([0,1] ,| \cdot |,\phi_{a})=\overline{\emph{mdim}}_{\emph{M}}([0,1] ,| \cdot |,\phi_{a})=a.   $$
\end{theorem}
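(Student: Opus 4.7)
The plan is to construct $\phi_a$ by placing $3^n$-horseshoes $I_n$ of carefully chosen length on $[0,1]$. The case $a=0$ is trivial: take $\phi_0$ to be the identity map, which is Lipschitz (i.e., $1$-H\"older) and has zero metric mean dimension. For $a\in(0,1)$, set $r=(1-a)/a$, so that $(r+1)a=1$, and $C=3^r-1$. Partition $[0,1]$ as $\bigcup_{n\ge 1}I_n$, where $I_n=[a_{n-1},a_n]$ and $|I_n|=C\cdot 3^{-nr}$, which ensures $\sum_n|I_n|=1$. On each $I_n$, define $\phi_a|_{I_n}=T_n^{-1}\circ g^n\circ T_n$, with $T_n:I_n\to[0,1]$ the increasing affine bijection and $g(x)=|1-|3x-1||$, and set $\phi_a(1)=1$. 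Then each $I_n$ is a $3^n$-horseshoe for $\phi_a$ with equal-length legs, and the identities $\phi_a(a_n)=a_n$ guarantee continuity everywhere on $[0,1]$.

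The metric mean dimension follows directly from Theorem \ref{misiu} and Proposition \ref{ddf}. Since $\log|I_n|/\log s_n=(\log C-nr\log 3)/(n\log 3)\to -r$ and $\log s_{n-1}/\log s_n=(n-1)/n\to 1$, both the limit appearing in Theorem \ref{misiu} and the $\liminf$ in Proposition \ref{ddf} evaluate to $1/(1+r)=a$. Hence $\underline{\text{mdim}}_{\text{M}}([0,1],|\cdot|,\phi_a)=\overline{\text{mdim}}_{\text{M}}([0,1],|\cdot|,\phi_a)=a$.

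The heart of the argument, and the main obstacle, is verifying $\phi_a\in H^{1-a}([0,1])$. The crucial observation is that $g$ is $3$-Lipschitz on $[0,1]$, so $g^n$ is $3^n$-Lipschitz, and therefore $\phi_a|_{I_n}$ is $3^n$-Lipschitz (the affine rescalings by $|I_n|$ exactly cancel). Combined with the trivial bound $|\phi_a(x)-\phi_a(y)|\le|I_n|$ for $x,y\in I_n$, this yields
\[
\frac{|\phi_a(x)-\phi_a(y)|}{|x-y|^{1-a}}\le\min\Bigl(3^n|x-y|^a,\ \frac{|I_n|}{|x-y|^{1-a}}\Bigr),
\]
and the two bounds balance at $|x-y|=|I_n|/3^n$ with common value $C^a$, thanks to the identity $ra=1-a$. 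For the cross-interval case $x\in I_n$, $y\in I_m$ with $m>n$, I would split into $m=n+1$ (where $I_n$ and $I_{n+1}$ share the endpoint $a_n$) and $m\ge n+2$. In both sub-cases, the Lipschitz bound on each horseshoe together with $\phi_a(a_k)=a_k$ yields $|\phi_a(x)-\phi_a(y)|\le 3^n(a_n-x)+(a_{m-1}-a_n)+3^m(y-a_{m-1})$, while the total variation gives $|\phi_a(x)-\phi_a(y)|\le a_m-a_{n-1}\le |I_n|/(1-3^{-r})$. Balancing these against $|y-x|^{1-a}$ and invoking $ra=1-a$ once more reduces the H\"older ratio to a universal constant independent of $n,m$. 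The subtle point is precisely this cross-horseshoe estimate: the choice $r=(1-a)/a$ is forced by requiring the slope growth $3^n$ and the shrinkage $|I_n|\sim 3^{-nr}$ to match, so that the H\"older and metric-mean-dimension constraints are simultaneously satisfied.
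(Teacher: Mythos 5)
Your proposal is correct and follows essentially the same route as the paper: the identical construction (rescaled tent-map iterates $g^{n}$ on intervals $I_{n}$ with $|I_{n}|\asymp 3^{-nr}$, $r=(1-a)/a$), Theorem \ref{misiu} together with Proposition \ref{ddf} for the value $a$ of the metric mean dimension, and a case-by-case H\"older estimate driven by the identity $ra=1-a$. Your bookkeeping of the H\"older verification (balancing the $3^{n}$-Lipschitz bound against the diameter bound $|I_{n}|$, and splitting cross-interval differences through the fixed endpoints $a_{k}$) is just a tidier packaging of the same estimates the paper carries out case by case.
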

\begin{proof} If $a=0$ we can take $\phi_{0}$ as the identity on $[0,1]$. 
Fix  $r>0$ and let $ a=\frac{1}{r+1}$.  Set $a_{0}=0$ and $a_{n}= \sum_{i=0}^{n-1}\frac{C}{3^{ir}}$ for $n\geq 1$, where $C=\frac{1}{\sum_{i=0}^{\infty}\frac{1}{3^{ir}}}= \frac{3^{r}-1}{3^{r}}$. For each $n\geq 1$, set $I_{n}=[a_{n-1},a_{n}]$ and take 
 $T_{n}:I_{n}\rightarrow [0,1]$ and $g$ as in Example \ref{1example}.    Set $\phi_{a}:[0,1]\rightarrow [0,1]$, given by   $\phi_{\alpha}|_{I_{n}}= T_{n}^{-1}\circ g^{n}\circ T_{n}$ for any $n\geq 1$.   It follows from Theorem \ref{misiu} that  
$$  \underline{\text{mdim}}_{\text{M}}([0,1], | \cdot |,\phi_{a})=  \overline{\text{mdim}}_{\text{M}}([0,1],| \cdot |,\phi_{a})=a .$$

We will prove that  $\phi_{a}: [0,1] \to [0,1]$ is $\alpha$-H\"older with  $\alpha = \frac{r}{r+1}=1-a.$  
Let $x,y\in [0,1]$ be two distinct points.  Thus, there exist $n\geq 1$ and $m\geq 1$ such that $x\in I_{n}$ and $y\in I_{m}$. Given that each $I_{k}$ is $\phi_{a}$-invariant, we have $\phi_{a}(x)\in I_{n}$ and $\phi_{a}(y)\in I_{m}$. We have the following cases:

\medskip

\noindent \textbf{Case} $x=0:$ Since $\phi_{a}(0)=0$, we have  $$|\phi_{a}(0)-\phi_{a}(y)|=|\phi_{a}(y)|\leq \frac{1-3^{-mr}}{1-3^{-r}} .$$ Furthermore,  $$ \frac{1-3^{(-m+1)r}}{1-3^{-r}} \leq|y|\leq \frac{1-3^{-mr}}{1-3^{-r}}\quad\text{ and thus }\quad \frac{1}{|y|}\leq \frac{1}{\frac{1-3^{(-m+1)r}}{1-3^{-r}}}\quad\text{and}\quad \frac{1}{\log\frac{1}{|y|}}\leq \frac{1}{\log \frac{1-3^{-mr}}{1-3^{-r}}}. $$ Hence, if  $\omega $  is the map   defined in Lemma \ref{hfnff}, we have $$ \frac{ |\phi(y)|}{\omega(|y|)}<\frac{\frac{1-3^{-mr}}{1-3^{-r}}}{\frac{1-3^{(-m+1)r}}{1-3^{-r}} \log \frac{1-3^{-mr}}{1-3^{-r}} } = \frac{1-3^{-mr}}{[1-3^{(-m+1)r}]\log \frac{1-3^{-mr}}{1-3^{-r}} }   ,$$
which converges to $\frac{1}{\log \frac{1}{ 1-3^{-r}} }$. This fact implies that $\frac{ |\phi(y)|}{\omega(|y|)}$ is bounded.

\medskip 

\noindent  \textbf{Case}  $n=m+k$, \textbf{with} $k>1$ \textbf{for a fixed} $m$:   we know that 

\[
|\phi_{a}(x)-\phi_{a}(y)|\leq \frac{3^{(1-m)r}-3^{-nr}}{1-3^{-r}}=\frac{3^{(n-m+1)r}-1}{3^{nr}(1-3^{-r})}\quad\text{ and }\quad
 \frac{1}{|x-y|}\leq \frac{(1-3^{-r})3^{nr}}{3^{(n-m)r}-1},\]
hence
\begin{equation}\label{fbbg}
\frac{|\phi_{a}(x)-\phi_{a}(y)|}{|x-y|^{\alpha}}\leq \frac{3^{nr\alpha} (1-3^{-r})^{\alpha-1} [3^{(n-m+1)r}-1]}{3^{nr}[3^{(n-m)r}-1]^{\alpha}} ={\frac{3^{nr(\alpha-1)} (1-3^{-r})^{\alpha-1} [3^{(k+1)r}-1]}{(3^{kr}-1)^{\alpha}}}.
\end{equation}

\medskip \noindent Note that, if the sequence $\{k\}$ is bounded, then \eqref{fbbg} is convergent to $0$ as $n\to \infty$, since $\alpha-1<0$. Hence   \eqref{fbbg} is bounded if $\{k\}$ is bounded. Therefore, we can assume that the sequence $\{k\}$ is unbounded. From \eqref{fbbg}, we write     

\begin{align*}
\frac{|\phi_{a}(x)-\phi_{a}(y)|}{|x-y|^{\alpha}} &\leq 3^{nr(\alpha-1)} (1-3^{-r})^{\alpha-1}\  \frac{\frac{3^{(k+1)r}-1}{3^{kr\alpha}}}{\frac{(3^{kr}-1)^\alpha}{3^{kr\alpha}}}=3^{nr(\alpha-1)} (1-3^{-r})^{\alpha-1}  \frac{(3^{kr+r-kr\alpha}-3^{-kr\alpha})}{\biggl(1-\frac{1}{3^{kr}}\biggr)^\alpha}.
\end{align*}
Since
$\underset{k\to \infty}{\lim}\biggl(1-\frac{1}{3^{kr}}\biggr)=1$ and $ \underset{k\to \infty}{\lim} 3^{-kr\alpha}=0,$
we have
\begin{align*}
\lim_{k\to \infty} 3^{nr(\alpha-1)} (1-3^{-r})^{\alpha-1} \ \frac{(3^{kr+r-kr\alpha}-3^{-kr\alpha})}{\biggl(1-\frac{1}{3^{kr}}\biggr)^\alpha}&=(1-3^{-r})^{\alpha-1} \lim_{k\to \infty} 3^{nr\alpha-nr+kr+r-kr\alpha}\\
&=(1-3^{-r})^{\alpha-1} \  3^r\ \lim_{k\to \infty}3^{(n-k)r\alpha-(n-k)r}\\
&=(1-3^{-r})^{\alpha-1} \  3^r\ \lim_{k\to \infty} 3^{(n-k)(\alpha-1)r}\\
&=(1-3^{-r})^{\alpha-1} \  3^r\ \lim_{k\to \infty} 3^{m(\alpha-1)r}\\
&= (1-3^{-r})^{\alpha-1} 3^{m(\alpha-1)r+r}.
\end{align*}
Thus, in any case,  $\frac{|\phi_{a}(x)-\phi_{a}(y)|}{|x-y|^{\alpha}}$ is bounded with $x\in I_n$, $y\in I_m$, $n>m+1$.

\medskip \noindent 
 \textbf{Case} $n=m+1$:  we have $I_m=[a_{m-1}, a_m] ,$ $I_{m+1}=[a_m, a_{m+1}],$
$$I_m=I^{1}_m\cup \cdots \cup I^{3^m}_{m}; \ \ I_{m+1}=I^{1}_{m+1}\cup \cdots \cup I^{3^{m+1}}_{m+1} \ \  \text{and} \ \ I_m\cap I_{m+1}=\{a_{m}\},$$ where $|I_{m}^{j}|=\frac{|I_{m}|}{3^{m}}$ and $|I_{m+1}^{j}|=\frac{|I_{m+1}|}{3^{m+1}}$. 
Suppose that $x\in I^{1}_{m+1}$ and $y\in I^{3^m}_{m}$. 
Hence \begin{equation}\label{bcmvns}
|x-y|\leq a_m+\frac{|I_{m+1}|}{3^{m+1}}-\biggl(a_m-\frac{|I_{m}|}{3^{m}}\biggr)=\frac{|I_{m+1}|}{3^{m+1}}+\frac{|I_{m}|}{3^{m}}=\frac{C(1+3^{r+1})}{3^{m(r+1) +1}}.
\end{equation}
It is easy to check that 
$$\phi _{a}(y)=3^m(y-a_m)+a_m\quad\text{and}\quad \phi _{a} (x)=3^{m+1}(x-a_m)+a_m.$$
Hence, 
\begin{align*}
 |\phi_{a}(x)-\phi_{a}(y)|&=|3^{m+1}(x-a_m)-3^{m}(y-a_m)|=3^{m+1}x-3^{m}y-(3^{m+1}-3^m)a_m\\
(y\leq a_{m} )\quad  & \leq  3^{m+1}x-3^{m}y-(3^{m+1}-3^m)y= 3^{m+1}(x-y).
\end{align*}
Therefore, from \eqref{bcmvns} we have
\begin{align*}
\frac{|\phi_{a}(x)-\phi_{a}(y)|}{|x-y|^{\alpha}}&\leq 3^{m+1} |x-y|^{1-\alpha}\leq 3^{m+1}\biggl(\frac{C(1+3^{r+1})}{3^{m(r+1) +1}} \biggr)^{1-\alpha}= \frac{3^{m+1}(C(1+3^{r+1}))^{1-\alpha}}{3^{(m(r+1)+1)(1-\alpha)}}\\
&= 3^{m+1 +(m(r+1)+1)(\alpha-1)}(C(1+3^{r+1}))^{1-\alpha}=3^{m(\alpha(r+1)-r)}3^{\alpha}(C(1+3^{r+1}))^{1-\alpha},
\end{align*}
which is bounded if and only if 
$\alpha(r+1)-r\leq 0,$  that is, if $\alpha\leq \frac{r}{1+r}.$   

If $x\in I^{j}_{m+1}$ and $y\in I^{i}_{m}$ for some $j$ and $i$, then there are ${x}'\in I^{1}_{m+1}$ and ${y}'\in I^{3^m}_{m}$ such that  $ \phi_a(x)= \phi_a({x}') $ and $ \phi(y)= \phi({y}').$ Moreover, $|x-y|\geq |{x}'-{y}'|$, which provides 
\begin{align*}
| \phi_a(x)-\phi(y)| &=| \phi_a({x}')- \phi_a({y}')|\leq 3^{m(\alpha(r+1)-r)}3^{\alpha}(C(1+3^{r+1}))^{1-\alpha}|{x}'-{y}'|^{\alpha} \\
&\leq  3^{m(\alpha(r+1)-r)}3^{\alpha}(C(1+3^{r+1}))^{1-\alpha}|{x}-{y}|^{\alpha}.
\end{align*}

\noindent \textbf{Case} $n=m$: we have $\phi (x)=3^n(x-a_n)+a_n$ for any $x\in I^{3n}_{n}$. Hence, if  $ x, y\in I^{3n}_{n}$, then
$
|\phi_{a}(x)-\phi_{a}(y)| =3^n |x-y|$ and therefore 
\begin{align*}
\frac{|\phi_{a}(x)-\phi_{a}(y)|}{|x-y|^{\alpha}}&= 3^n |x-y|^{1-\alpha}\leq  3^n\ |I^{3n}_{n}|^{1-\alpha}=3^n\biggl(\frac{|I_n|}{3^n}\biggr)^{1-\alpha}=3^n \biggl(\frac{C}{3^{(n-1)r}3^n}\biggr)^{1-\alpha}\\
&=C^{1-\alpha}3^{n-(1-\alpha)(nr+n)}=C^{1-\alpha}3^{r-\alpha r} 3^{n( \alpha -r+\alpha r)},
\end{align*}
which is bounded if and only if 
$ \alpha -r+\alpha r\leq 0$, that is, if and only if $  \alpha \leq \frac{r}{1+r} .$

 We can conclude that $\phi_a$ is an $(1-a)$-H\"older continuous map.   \end{proof}

Let $X$ be a compact metric space with metric $d$.  If $\phi_{1},\dots,\phi_{n}\in C^{0}(X)$ and $$\overline{\text{mdim}}_{\text{M}}(X ,d,\phi_{k})=\underline{\text{mdim}}_{\text{M}}(X ,d,\phi_{k})=a_{k},\quad\text{for any }k=1,\dots,n,$$ 
we have \begin{equation*}\label{jme}  \underline{\text{mdim}}_{\text{M}}(X\times \cdots\times X ,d^{n},\phi_{1}\times\cdots \times \phi_{n})  =  \overline{\text{mdim}}_{\text{M}}(X\times \cdots\times X ,d^{n},\phi_{1}\times\cdots \times \phi_{n})=   \sum_{k=1}^{n}a_{k},\end{equation*} 
where  \begin{equation*}\label{bnm}d^{n}((x_{1},\dots,x_{n}),(y_{1},\dots,y_{n}))=d(x_{1},y_{1}) +\cdots+ d (x_{n},y_{n}),\quad \text{ for  }x_{1},\dots,x_{n},y_{1},\dots,y_{n}\in X.\end{equation*} (see \cite{JeoPMD}, Theorem 3.13, v). Hence, it  follows from Theorem  \ref{example-a} that: 

\begin{corollary}
Fix $b\in [0,n]$ and $\alpha ={1-\frac{b}{n}}$. There exists $\psi_{b}\in H^{\alpha} ([0,1]^{n})$ such that $$\overline{\emph{mdim}}_{\emph{M}}([0,1]^{n}  ,d^{n},\psi_{b})=\underline{\emph{mdim}}_{\emph{M}}([0,1]^{n}  ,d^{n},\psi_{b})=b.$$  
\end{corollary}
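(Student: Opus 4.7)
\medskip

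\noindent\textbf{Proposal for the proof of the Corollary.} The plan is to reduce the corollary to Theorem~\ref{example-a} combined with the product formula for metric mean dimension cited immediately before the statement. First I handle the degenerate endpoints: if $b=0$ then $\alpha=1$ and I take $\psi_{0}=\mathrm{id}_{[0,1]^{n}}$, which is Lipschitz and has metric mean dimension zero. For the main range $b\in (0,n)$, I set $a=b/n\in (0,1)$ and invoke Theorem~\ref{example-a} to obtain a map $\phi_{a}\in H^{1-a}([0,1])$ with
$$\underline{\text{mdim}}_{\text{M}}([0,1],|\cdot|,\phi_{a})=\overline{\text{mdim}}_{\text{M}}([0,1],|\cdot|,\phi_{a})=a.$$
Note that $1-a=1-b/n=\alpha$, so $\phi_{a}$ is exactly $\alpha$-H\"older on $[0,1]$.

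Next I define
$$\psi_{b}=\phi_{a}\times\cdots\times\phi_{a}\colon [0,1]^{n}\longrightarrow [0,1]^{n}\qquad(n\text{ copies}).$$
Applying the product formula from \cite{JeoPMD}, Theorem~3.13(v), quoted just before the corollary, with $\phi_{k}=\phi_{a}$ and $a_{k}=a$ for every $k=1,\dots,n$, I immediately obtain
$$\underline{\text{mdim}}_{\text{M}}([0,1]^{n},d^{n},\psi_{b})=\overline{\text{mdim}}_{\text{M}}([0,1]^{n},d^{n},\psi_{b})=\sum_{k=1}^{n}a=na=b,$$
which gives the metric mean dimension claim.

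It remains to verify that $\psi_{b}$ is $\alpha$-H\"older with respect to the metric $d^{n}$. Let $K>0$ be an $\alpha$-H\"older constant for $\phi_{a}$. For any $x=(x_{1},\dots,x_{n})$ and $y=(y_{1},\dots,y_{n})$ in $[0,1]^{n}$, I write
$$d^{n}(\psi_{b}(x),\psi_{b}(y))=\sum_{i=1}^{n}|\phi_{a}(x_{i})-\phi_{a}(y_{i})|\leq K\sum_{i=1}^{n}|x_{i}-y_{i}|^{\alpha}.$$
Setting $t_{i}=|x_{i}-y_{i}|\geq 0$, I apply H\"older's inequality with conjugate exponents $p=1/\alpha$ and $q=1/(1-\alpha)$ to the vectors $(t_{i}^{\alpha})_{i}$ and $(1)_{i}$ to obtain
$$\sum_{i=1}^{n}t_{i}^{\alpha}\leq\Bigl(\sum_{i=1}^{n}t_{i}\Bigr)^{\alpha}\cdot n^{1-\alpha}=n^{1-\alpha}\,d^{n}(x,y)^{\alpha}.$$
Hence
$$d^{n}(\psi_{b}(x),\psi_{b}(y))\leq Kn^{1-\alpha}\,d^{n}(x,y)^{\alpha},$$
which shows $\psi_{b}\in H^{\alpha}([0,1]^{n})$ and completes the proof.

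Since every step is essentially bookkeeping on top of Theorem~\ref{example-a} and the product formula, there is no serious obstacle; the only subtlety is the passage from the componentwise H\"older estimate to a global one on $([0,1]^{n},d^{n})$, which is exactly where H\"older's inequality is invoked and relies crucially on $\alpha\in(0,1]$.
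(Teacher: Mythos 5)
Your proposal is correct and follows essentially the same route as the paper: take $\phi_{a}$ from Theorem~\ref{example-a} with $a=b/n$, form the $n$-fold product $\psi_{b}=\phi_{a}\times\cdots\times\phi_{a}$, and apply the cited product formula to get metric mean dimension $na=b$. The only difference is that the paper simply asserts that the product of $\alpha$-H\"older maps is $\alpha$-H\"older, whereas you verify it explicitly for the metric $d^{n}$ via the inequality $\sum_{i}t_{i}^{\alpha}\leq n^{1-\alpha}\bigl(\sum_{i}t_{i}\bigr)^{\alpha}$ — a correct and welcome addition of detail, not a different argument.
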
 
\begin{proof}
Take $\phi_{a}:[0,1]\rightarrow [0,1]$ defined in the proof of Theorem  \ref{example-a}, where $a=\frac{b}{n}$, and set  $\psi_{b}=\phi_{a}\times \cdots \times \phi_{a}:[0,1]^{n}\rightarrow[0,1]^{n}$. We have $$ \underline{\text{mdim}}_{\text{M}}([0,1]^{n} ,d^{n},\psi_{b})  =  \overline{\text{mdim}}_{\text{M}}([0,1]^{n} ,d^{n},\psi_{b})=  na=b.$$
 Given that each $\phi_{a}$ is an $(1-\frac{b}{n})$-H\"older continuous map, we have $\psi_{b}$ is an $(1-\frac{b}{n})$-H\"older continuous map. 
\end{proof}

\section{Some conjectures about this research}

Note in Theorem \ref{example-a}  there exists a relationship between the H\"older  exponent $\alpha$ and the metric mean dimension of a continuous map on the interval. We will present the next conjectures about this relationship which can be the subject of future research.

\begin{conjecturea}\label{conj1} If $\phi:[0,1]\rightarrow [0,1]$ is an $\alpha$-H\"older continuous map, then    $$\emph{mdim}_{\emph{M}}([0,1],|\cdot |,\phi) \leq 1-\alpha.$$\end{conjecturea}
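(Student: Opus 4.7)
The approach is to control the $(n,\phi,\varepsilon)$-spanning number by counting the nonempty atoms of a joint refinement, using the H\"older condition to get a sharp per-step growth estimate. Fix $\varepsilon\in(0,1)$, let $K$ be the $\alpha$-H\"older constant of $\phi$, and let $\mathcal{P}_{\varepsilon}$ be a partition of $[0,1]$ into consecutive intervals of length $\leq\varepsilon$, so $\#\mathcal{P}_{\varepsilon}\leq\lceil 1/\varepsilon\rceil$. Form the joint set-theoretic refinement $\mathcal{P}_{\varepsilon}^{(n)}=\bigvee_{k=0}^{n-1}\phi^{-k}\mathcal{P}_{\varepsilon}$: its nonempty atoms have the form $B=\bigcap_{k=0}^{n-1}\phi^{-k}(A_{k})$ with $A_{k}\in\mathcal{P}_{\varepsilon}$, and since every point of $B$ realises the same itinerary one has $|\phi^{k}(x)-\phi^{k}(y)|\leq|A_{k}|\leq\varepsilon$ for all $x,y\in B$ and $0\leq k\leq n-1$, so the $d_{n}$-diameter of $B$ is at most $\varepsilon$. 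A single representative per nonempty atom (passing to $\mathcal{P}_{\varepsilon/2}$ if strict inequality is needed) is therefore an $(n,\phi,\varepsilon)$-spanning set, giving $\text{span}(n,\phi,\varepsilon)\leq \#\mathcal{P}_{\varepsilon}^{(n)}$.

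The central estimate, proved by induction on $n$, is
\[
\#\mathcal{P}_{\varepsilon}^{(n)}\leq \Bigl(\tfrac{1}{\varepsilon}+1\Bigr)\Bigl(\tfrac{K}{\varepsilon^{1-\alpha}}+2\Bigr)^{n-1}.
\]
For the inductive step let $B'\in\mathcal{P}_{\varepsilon}^{(n-1)}$. By construction $\phi^{n-2}(B')\subseteq A_{n-2}$, hence $\text{diam}(\phi^{n-2}(B'))\leq\varepsilon$, and applying the H\"older hypothesis exactly once yields
\[
\text{diam}(\phi^{n-1}(B'))=\text{diam}\bigl(\phi(\phi^{n-2}(B'))\bigr)\leq K\,\text{diam}(\phi^{n-2}(B'))^{\alpha}\leq K\varepsilon^{\alpha}.
\]
Thus $\phi^{n-1}(B')$ lies in an interval of length at most $K\varepsilon^{\alpha}$ and so meets at most $K/\varepsilon^{1-\alpha}+2$ elements of $\mathcal{P}_{\varepsilon}$; consequently $B'$ extends to at most $K/\varepsilon^{1-\alpha}+2$ atoms in $\mathcal{P}_{\varepsilon}^{(n)}$, closing the induction. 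This is the only step in which the exponent $\alpha$ enters. Taking $\log$, dividing by $n$ and letting $n\to\infty$ produces $\text{span}(\phi,\varepsilon)\leq \log\bigl(K/\varepsilon^{1-\alpha}+2\bigr)$; dividing by $|\log\varepsilon|$ and letting $\varepsilon\to 0^{+}$ gives
\[
\overline{\text{mdim}}_{\text{M}}([0,1],|\cdot|,\phi)\leq \limsup_{\varepsilon\to 0}\frac{\log(K/\varepsilon^{1-\alpha}+2)}{|\log\varepsilon|}=1-\alpha,
\]
and the same bound immediately controls the lower metric mean dimension. Theorem \ref{example-a} shows equality is attained for every $\alpha\in(0,1)$.

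The real obstacle, where one could easily get stuck, is not the counting but the transition from joint-partition atoms to spanning sets. A first instinct is to count connected components of atoms, which forces one to bound the lap number of $\phi^{n-1}|_{B'}$; this quantity is not controlled by H\"older continuity (and may in fact be infinite), yielding only the strictly weaker bound $(1-\alpha)/\alpha$ instead of $1-\alpha$. The key observation that avoids this is that the $d_{n}$-diameter of an atom depends only on the common itinerary of its points and not on its topology, so a single representative per (possibly highly disconnected) atom is sufficient and the clean product estimate above goes through without any lap-number input.
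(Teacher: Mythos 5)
There is no proof of this statement in the paper to compare yours against: the statement is Conjecture~A, explicitly left open by the authors, and the only related content is Theorem~\ref{example-a}, which shows that the bound $1-\alpha$ (if true) is attained, together with the observation that Conjectures~B and~C would follow from A. So your proposal must be judged on its own, and as far as I can check it is a correct proof of the conjectured inequality. The decisive point is exactly the one you isolate: if $B'$ is a nonempty atom of $\bigvee_{k=0}^{n-2}\phi^{-k}\mathcal{P}_{\varepsilon}$, then $\phi^{n-2}(B')$ is already trapped in a single element of $\mathcal{P}_{\varepsilon}$, so the H\"older modulus is applied only \emph{once} per time step and always to a set of diameter at most $\varepsilon$, giving $\mathrm{diam}\,\phi^{n-1}(B')\leq K\varepsilon^{\alpha}$ and a per-step branching factor of order $K\varepsilon^{\alpha-1}$. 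This sidesteps both the useless iterated estimate $K^{1+\alpha+\cdots}\varepsilon^{\alpha^{n}}$ and any lap-number count (which H\"older continuity does not control), and the passage from atoms to $(n,\phi,\varepsilon)$-spanning sets via itineraries is sound because $d_{n}$-closeness depends only on the itinerary. Since $\log(K\varepsilon^{\alpha-1}+2)/|\log\varepsilon|\to 1-\alpha$, you get $\overline{\text{mdim}}_{\text{M}}([0,1],|\cdot|,\phi)\leq 1-\alpha$, hence also the bound for the lower metric mean dimension; combined with Theorem~\ref{example-a} this would make $1-\alpha$ sharp, and it would also settle Conjectures~B and~C.

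Two small repairs are needed, neither affecting the conclusion. First, the branching estimate ``an interval of length $K\varepsilon^{\alpha}$ meets at most $K/\varepsilon^{1-\alpha}+2$ elements of $\mathcal{P}_{\varepsilon}$'' requires the partition elements to have length bounded \emph{below} by a constant multiple of $\varepsilon$, not merely $\leq\varepsilon$ as you stipulate; take $\lceil 1/\varepsilon\rceil$ intervals of equal length $1/\lceil 1/\varepsilon\rceil\in[\varepsilon/2,\varepsilon]$, which changes the factor to $2K/\varepsilon^{1-\alpha}+2$ and nothing in the limit. Second, the strictness issue for spanning sets must indeed be handled as you indicate, by refining at mesh $\varepsilon/2$ while spanning at scale $\varepsilon$. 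Finally, I would not endorse the aside that a component/lap-number count yields the weaker bound $(1-\alpha)/\alpha$: H\"older continuity gives no control on lap numbers at all, so that route yields nothing; but this remark is immaterial to your argument.
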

 
 Note in Theorem \ref{example-a} we prove there exists an $\alpha$-H\"older continuous map $\phi:[0,1]\rightarrow [0,1]$ with $\text{mdim}_{\text{M}}([0,1],|\cdot|,\phi)= 1-\alpha $, where $\alpha\in (0,1)$.  From \eqref{boundd} we have does not exist any continuous map on the interval with metric mean dimension biggest to 1.

\begin{conjectureb} There is no any  $\alpha$-H\"older continuous map  $\phi:[0,1]\rightarrow [0,1]$,  with  $\alpha>0$ and  $$\emph{mdim}_{\emph{M}}([0,1],|\cdot |,\phi) = 1.$$\end{conjectureb}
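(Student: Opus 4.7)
The plan is to attempt the stronger Conjecture A (that $\text{mdim}_{\text{M}}([0,1],|\cdot|,\phi) \leq 1-\alpha$ for every $\alpha$-H\"older $\phi$), which immediately yields Conjecture B. Let $K$ be the H\"older constant of $\phi$ and set $K' := K^{1/(1-\alpha)}$. Iterating the H\"older inequality gives $|\phi^k(x)-\phi^k(y)| \leq K' |x-y|^{\alpha^k}$, so each $\phi^k$ is $\alpha^k$-H\"older with uniform constant $K'$.

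First I would bound $\text{span}(n,\phi,\varepsilon)$ by an itinerary count. Choose $M \sim (K/\varepsilon)^{1/\alpha}$ and partition $[0,1]$ into $M$ equal sub-intervals $J_1,\dots,J_M$; this choice ensures that $\phi$ has oscillation $\leq \varepsilon$ on each $J_j$. Assigning to each $x$ the itinerary $(i_0(x),\dots,i_{n-1}(x))$ with $\phi^k(x) \in J_{i_k(x)}$, two points sharing an itinerary have $d_n$-distance $\leq \varepsilon$, so $\text{span}(n,\phi,\varepsilon)$ is dominated by the number of realized itineraries.

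Counting itineraries combinatorially: for fixed $j_0$, the image $\phi(J_{j_0})$ has diameter $\leq \varepsilon$ and hence intersects at most $\lceil \varepsilon M\rceil + 1 \lesssim \varepsilon^{-(1-\alpha)/\alpha}$ intervals $J_{j_1}$. Iterating, the number of length-$n$ itineraries is at most $M \cdot (C\varepsilon M)^{n-1}$, which (after unwinding) equals $\varepsilon^{-[n(1-\alpha)+\alpha]/\alpha}$ up to constants. This would yield
$$\overline{\text{mdim}}_{\text{M}}([0,1],|\cdot|,\phi) \leq \frac{1-\alpha}{\alpha},$$
and for $\alpha > 1/2$ this is strictly less than $1$, proving Conjecture B in that range.

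The main obstacle is the regime $\alpha \leq 1/2$, where the quantity $(1-\alpha)/\alpha$ is $\geq 1$ and the bound recovers nothing beyond the trivial estimate \eqref{bounddw}. Closing this gap would require either (i) a sharpened successor count exploiting the continuity of $j\mapsto \phi(J_j)$, which forces $\phi(J_j)$ and $\phi(J_{j+1})$ to share an endpoint and makes the admissible-transition graph far sparser than the worst case; or (ii) a bootstrap via iterates---noting $\phi^n$ is $\alpha^n$-H\"older---combined with a scaling relation linking $\text{mdim}_{\text{M}}(\phi^n)$ to $\text{mdim}_{\text{M}}(\phi)$. However, the decay $\alpha^n \to 0$ means any naive bootstrap degrades back to the trivial $\text{mdim}_{\text{M}}\leq 1$. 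This is presumably the reason the statement is left as a conjecture: a purely metric argument using only the H\"older inequality loses too much information, and a complete proof likely demands a finer combinatorial study of the itinerary graph or a dimension-theoretic bound on the orbit curve $x\mapsto(x,\phi(x),\dots,\phi^{n-1}(x))$ inside $[0,1]^n$.
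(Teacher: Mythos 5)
You are proving Conjecture~B, which the paper does not prove: the authors only record that it would follow from Conjecture~A together with the trivial bound \eqref{bounddw}, and both conjectures are left open there. Your strategy is exactly that reduction (attack the stronger bound $\overline{\text{mdim}}_{\text{M}}([0,1],|\cdot|,\phi)\le 1-\alpha$), and the partial estimate you derive is correct as far as it goes: with a partition of mesh $(\varepsilon/K)^{1/\alpha}$ the itinerary count indeed yields $\overline{\text{mdim}}_{\text{M}}([0,1],|\cdot|,\phi)\le \frac{1-\alpha}{\alpha}$, which settles the claim only for $\alpha>1/2$. As you yourself acknowledge, the proposal therefore does not prove the statement; for $\alpha\le 1/2$ it gives nothing beyond \eqref{bounddw}.

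The gap, however, is not intrinsic to your method; it is created by an over-fine partition. You insist that each cell have $\phi$-oscillation at most $\varepsilon$, which forces mesh $(\varepsilon/K)^{1/\alpha}$ and inflates the per-step branching to roughly $K^{1/\alpha}\varepsilon^{-(1-\alpha)/\alpha}$. That oscillation requirement is superfluous: if $x$ and $y$ share the itinerary $(i_0,\dots,i_{n-1})$, then for every $k$ the points $\phi^k(x)$ and $\phi^k(y)$ lie in the \emph{same} cell $J_{i_k}$, so $d_n(x,y)$ is bounded by the mesh with no appeal to the regularity of $\phi$. Take cells of length $\varepsilon/2$ instead; the H\"older condition is then used only where it helps, namely for the branching: $\phi(J_j)$ is an interval of diameter at most $K(\varepsilon/2)^{\alpha}$, hence meets at most $K(\varepsilon/2)^{\alpha-1}+2$ cells. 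The number of realized itineraries of length $n$ is at most $(2\varepsilon^{-1}+1)\bigl(K(\varepsilon/2)^{\alpha-1}+2\bigr)^{n-1}$, so $\text{span}(\phi,\varepsilon)\le \log\bigl(K(\varepsilon/2)^{\alpha-1}+2\bigr)$, and dividing by $|\log\varepsilon|$ and letting $\varepsilon\to 0$ gives $\overline{\text{mdim}}_{\text{M}}([0,1],|\cdot|,\phi)\le 1-\alpha<1$ for every $\alpha\in(0,1]$, i.e.\ Conjecture~A and hence Conjecture~B in the full range of $\alpha$. (This is just the scale-by-scale version of the Lipschitz entropy bound of \cite{Katok} quoted in the introduction.) So the obstruction you perceive at $\alpha\le 1/2$, and the proposed remedies via sparser transition graphs or the iterates $\phi^n$ being $\alpha^n$-H\"older, are artifacts of the mesh choice rather than genuine obstacles; with the mesh taken comparable to $\varepsilon$, your own counting argument closes the proof.
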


 Note if  Conjecture A is true, then Conjecture B is a consequence of Conjecture A.

\begin{conjecturec}If $\phi:[0,1]\rightarrow [0,1]$ is an $\alpha$-H\"older continuous map for any  $\alpha\in (0,1)$, then   $$\emph{mdim}_{\emph{M}}([0,1],|\cdot |,\phi) = 0.$$\end{conjecturec}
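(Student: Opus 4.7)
The plan is to attack Conjecture C via a modulus-of-continuity estimate on Bowen balls. Under the hypothesis, the modulus $\omega(\delta):=\sup_{|x-y|\leq\delta}|\phi(x)-\phi(y)|$ of $\phi$ satisfies $\omega(\delta)\leq K_\alpha\delta^\alpha$ for every $\alpha\in(0,1)$ and some $K_\alpha>0$. First I would observe that if $\omega^{(k)}$ denotes the $k$-fold self-composition of $\omega$, then $|\phi^k(x)-\phi^k(y)|\leq \omega^{(k)}(|x-y|)$ for every $x,y\in[0,1]$ and hence $d_n(x,y)\leq \omega^{(n-1)}(|x-y|)$ (after checking that $\omega^{(k)}$ is nondecreasing in $k$ wherever $\omega(\eta)\geq \eta$; the contracting case is trivial). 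Consequently any $(n,\phi,\varepsilon)$-separated subset of $[0,1]$ is Euclidean-separated by $(\omega^{(n-1)})^{-1}(\varepsilon)$, giving the estimate $\text{sep}(n,\phi,\varepsilon)\leq 1+1/(\omega^{(n-1)})^{-1}(\varepsilon)$.

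Next I would iterate the $\beta$-H\"older inequality to obtain $\omega^{(k)}(\delta)\leq K_\beta^{1/(1-\beta)}\delta^{\beta^k}$, so that $(\omega^{(n-1)})^{-1}(\varepsilon)\geq (\varepsilon/K_\beta^{1/(1-\beta)})^{\beta^{-(n-1)}}$ and
\begin{equation*}
\frac{1}{n}\log \text{sep}(n,\phi,\varepsilon)\leq \frac{\beta^{-(n-1)}}{n}\Bigl(|\log\varepsilon|+\tfrac{\log K_\beta}{1-\beta}\Bigr)+O(1/n).
\end{equation*}
A single fixed $\beta<1$ is insufficient since $\beta^{-(n-1)}/n\to\infty$; the crux is that the hypothesis allows one to choose a \emph{different} H\"older exponent $\beta=\beta_n$ for each $n$. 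Selecting $\beta_n=1-c_n/n$ with $c_n\to 0$ slowly keeps $\beta_n^{-(n-1)}$ bounded, and after dividing by $|\log\varepsilon|$ and taking $\liminf_{\varepsilon\to 0}$ one would extract $\text{mdim}_{\text{M}}([0,1],|\cdot|,\phi)=0$, provided the H\"older constants $K_{\beta_n}$ do not grow too fast as $\beta_n\to 1^-$.

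The main obstacle is precisely this last point: the qualitative hypothesis ``$\alpha$-H\"older for every $\alpha\in(0,1)$'' does not a priori bound how fast $K_\alpha$ may blow up as $\alpha\to 1^-$, and a sufficiently pathological growth would defeat the naive optimization above. To handle this I would complement the modulus argument with a horseshoe argument in the spirit of Lemma \ref{lema1} and Theorem \ref{misiu}. Assuming for contradiction that $\text{mdim}_{\text{M}}([0,1],|\cdot|,\phi)\geq a>0$, I expect to extract a sequence of $s_n$-horseshoes $I_n$ for iterates $\phi^{k_n}$ satisfying $\log s_n/(k_n|\log(|I_n|/s_n)|)\gtrsim a$, much as in inequality \eqref{feff}. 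Applying the $\beta$-H\"older condition to $\phi^{k_n}$ on a single leg (as in the final computation in the proof of Theorem \ref{example-a}) forces the constraint $s_n^{\beta^{k_n}}|I_n|^{1-\beta^{k_n}}\leq K_\beta^{1/(1-\beta)}$. The technical difficulty concentrates in tuning $\beta$ as a function of $n$, using the fact that the hypothesis gives the bound for \emph{every} $\alpha\in(0,1)$, so that this constraint is violated for some $\beta$ close enough to $1$, thereby ruling out any positive value of $a$.
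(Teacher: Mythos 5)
This statement is Conjecture~C of the paper: the authors do not prove it, they only observe that it would follow from Conjecture~A. So there is no proof in the paper to compare against, and the question is whether your plan actually closes the conjecture. It does not, and you essentially say so yourself. Your first argument (iterating the modulus of continuity to get $\omega^{(n-1)}(\delta)\leq K_\beta^{1/(1-\beta)}\delta^{\beta^{n-1}}$ and bounding separated sets) is sound as far as it goes, but after the substitution $\beta_n=1-c_n/n$ the error term becomes $\frac{e^{c_n}\log K_{\beta_n}}{c_n}$, and the hypothesis ``$\alpha$-H\"older for every $\alpha\in(0,1)$'' places no restriction whatsoever on the growth of $K_\alpha$ as $\alpha\to 1^-$ (e.g.\ $K_\alpha=e^{1/(1-\alpha)^2}$ is perfectly consistent with the hypothesis and destroys the estimate). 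You flag this as ``the main obstacle,'' which is correct, but flagging an obstacle is not the same as overcoming it.

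The fallback horseshoe argument does not repair this, for two reasons. First, it presupposes a converse to Lemma~\ref{lema1} and to inequality \eqref{feff}: namely that $\emph{mdim}_{\emph{M}}([0,1],|\cdot|,\phi)\geq a>0$ forces the existence of $s_n$-horseshoes $I_n$ for $\phi^{k_n}$ with \emph{quantitative control on both} $s_n$ \emph{and} $|I_n|$ relative to $k_n$. No such result is available: Misiurewicz's theorem produces horseshoes from positive entropy with control only on $\frac{1}{k_n}\log s_n$ and says nothing about the diameters $|I_n|$, which are exactly the quantities that metric mean dimension sees; Lemma~\ref{lema1} and Theorem~\ref{misiu} run in the opposite direction (from horseshoes to bounds on the mean dimension). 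Second, even granting such horseshoes, the constraint you derive, $s_n^{\beta^{k_n}}|I_n|^{1-\beta^{k_n}}\leq K_\beta^{1/(1-\beta)}$, still contains $K_\beta^{1/(1-\beta)}$, so the same uncontrolled blow-up of the H\"older constants reappears and you have not shown that any choice of $\beta$ close to $1$ yields a contradiction. In short, the proposal is a reasonable research plan that correctly isolates where the difficulty lies, but both of its branches terminate at the same unresolved point, and the statement remains a conjecture.
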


Note if  Conjecture A is true, then Conjecture C is a consequence of Conjecture A.

\end{document}